
\documentclass{amsart} 

\usepackage{graphicx}%
\usepackage{multirow}%
\usepackage{amsmath,amssymb,amsfonts}%
\usepackage{amsthm}%
\usepackage{mathrsfs}%
\usepackage[title]{appendix}%
\usepackage{xcolor}%
\usepackage{textcomp}%
\usepackage{manyfoot}%
\usepackage{booktabs}%
\usepackage{algorithm}%
\usepackage{algorithmicx}%
\usepackage{algpseudocode}%
\usepackage{listings}%

\usepackage[backref,bookmarks=true,colorlinks=true,pdfstartview=FitV,linkcolor=blue, citecolor=red,urlcolor=green]{hyperref}




\pagestyle{plain}

\newtheorem{thm}{Theorem}[section]
\newtheorem{lem}[thm]{Lemma}
\newtheorem{cor}[thm]{Corollary}
\newtheorem{prop}[thm]{Proposition}

\theoremstyle{remark}
\newtheorem{rem}[thm]{Remark}

\theoremstyle{definition}
\newtheorem{defn}[thm]{Definition}
%

%
%
%

\newcommand\bR{{\mathbb{R}}}

\newcommand\bZ{{\mathbb Z}}
\newcommand\bQ{{\mathbb Q}}

\newcommand\Hom{{\rm Hom}}
\newcommand\dev{{\bf dev}}
\newcommand\SI{{\mathbb{S}}}

\newcommand\clo{{\rm Cl}}

\newcommand\ra{\rightarrow}

\newcommand\che{\check}
\newcommand\emp{\emptyset}
\newcommand\eps{\epsilon}

\newcommand\Aut{{\mathbf{Aut}}}
\newcommand\Idd{{\rm I}}



\newcommand\rpn{\mathbb{RP}^n}

\newcommand\SL{{\mathsf{SL}}}

\newcommand\GL{{\mathsf{GL}}}

\newcommand\Def{\mathrm{Def}}

\newcommand{\llrrV}[1]{
	\left|\mkern-2mu\left|#1\right|\mkern-2mu\right|}


 
 

\setcounter{tocdepth}{3} 




\raggedbottom

\begin{document}
	

\title[real projective manifolds]{$\rpn \# \rpn$  and some others admit no real projective structure}


\author{Suhyoung Choi}

\thanks{This work was supported by 
the National Research Foundation of Korea(NRF) grant funded by the Korea government 
(MEST) (No. NRF-2022R1A2C3003162).}




\address{Department of Mathematics \\ KAIST \\Daejeon 34141, South Korea}



\date{\today}

\subjclass[2010]{Primary 57M50; Secondary 53A20, 53C15}

\keywords{Real projective manifolds, classification, nilpotent groups}

\begin{abstract} 
A manifold $M$ possesses a {\em real projective structure} if it has an atlas consisting of charts mapping to $\SI^n$, where the transition maps lie in $\SL_\pm(n+1, \bR)$. In this context, we present a concise proof demonstrating that $\rpn\#\rpn$ and a few other manifolds do not possess a real projective structure when $n\geq3$. Notably, our proof is shorter than those provided by Cooper-Goldman for $n=3$ and \c{C}oban for $n\geq 4$. To do this, we 
reprove the classification of closed real projective manifolds with infinite-cyclic holonomy groups 
by Benoist due to a small error. 
We will leverage the concept of the octantizability of real projective manifolds with nilpotent holonomy groups, as introduced by Benoist and Smillie, which serves as a powerful tool.
\end{abstract} 


\maketitle

\section{Introduction} 

A real projective structure on a manifold is defined as a maximal atlas of charts mapping to $\SI^n$, where the transition maps belong to $\SL_\pm(n+1, \bR)$. For a comprehensive introduction, refer to \cite{Gnote, psconv}.


In this paper, we will show that $\rpn \# \rpn$ and some generalizations do not admit a real projective structure in Corollary \ref{cor:main}. 
We have a simpler proof than those given by Cooper-Goldman \cite{CG15} and 
\c{C}oban \cite{Coban21}, building on the technical work of Benoist \cite{BenTor}. 
(See also Ruffoni \cite{Ruffoni}.)
Let us assume that $n \geq 3$. In this context, we define a great sphere of dimension $0$ as a pair of antipodal points.

We need some terminology in this paper for an expository purpose.
This is also due to the fact that the theory here was studied by only a few people. 

\begin{defn} \label{defn:gHopf}
 {\em Generalized Hopf manifolds of index $(p, q)$ for $1\leq p \leq q$} are closed real projective manifolds with virtually cyclic holonomy groups that are projectively covered by a universal cover of $\SI^n \setminus S^p \setminus S^q$, where $p \leq q$. Here, $S^p$ and $S^q$ represent disjoint totally geodesic spheres of dimensions $p$ and $q$, respectively. It is required that $p + q = n-1, 1 \leq p \leq q$, and $p, q \in [1, n-2]$ with $n \geq 3$.
\end{defn} 
It is worth noting that unless 
$p = 1$ and $q = n-2$, $\SI^n \setminus S^p \setminus S^q$ itself serves as a universal cover.
(In Example 4.7 of \cite{BenTor}, these are denoted by $\mathbb{R}_g$.)


\begin{defn} \label{defn:pHopf} 
A {\em pure Hopf manifold} is defined when $p = 0$ and $q = n-1$, where $n \geq 2$. It represents a closed real projective manifold with a virtually cyclic holonomy group that is projectively covered by a component of $\SI^n \setminus S^0 \setminus S^{n-1}$. Here, $S^{n-1}$ denotes a great sphere of dimension $n-1$, while $S^0$ corresponds to a pair of antipodal points that are disjoint from $S^{n-1}$.
\end{defn} 
(For further details, refer to \cite{Hopf}.)





\begin{itemize} 
\item For a vector $x \in \SI^k$, we denote by $x_-$ its antipodal vector. 
We note that $\rpn \# \rpn$, $n \geq 3$, is diffeomorphic to 
to the quotient space of $\SI^1\times \SI^{n-1}$ by the $\bZ_2$-actions given by 
$(t, x) \ra (t_-, x_-)$ for $t\in \SI^1, x\in \SI^{n-1}$.
We call this a {\em Cooper-Goldman manifold}. 
\item A {\em generalized Cooper-Goldman} manifold of type 
$p, q, 2\leq p, q,$ is 
$(\SI^1 \times \SI^p \times \SI^q)/\bZ_2$ where $\bZ_2$ acts on 
$\SI^1 \times \SI^p \times \SI^q$ 
 by $(x, y, z) \ra (x_-, y_-, z_-)$ for $x\in \SI^1, y\in \SI^p, z\in \SI^q$. 
\end{itemize} 

 By a {\em nontrivially twisted
extension of an abelian group $G$} by another group $H$, we
mean a group $G'$ with an exact sequence 
\[ 1 \ra G \ra G' \ra H \ra 1\] 
where $H$ acts nontrivially on $G$.
Of course $G'$ will be nonabelian in general. 

By Corollary \ref{cor:main},
the Cooper-Goldman manifold, a generalized Cooper-Goldman 
manifold of type $p, q$ for 
$p\leq q, p+q = n-1, 2 \leq p \leq  q \leq n-3$, $n \geq 3$,  
do not admit real projective structures. 
However, there are real projective structures for some manifolds that do not satisfy the $\pi_1$-action condition, as we explain in Section \ref{subsec:counter}. 
Also, any manifold covered by these will not admit any real projective structure.

 \begin{cor} \label{cor:main} 
Let $M$ be a closed real projective manifold, $n \geq 3$,  where $\pi_1(M)$ acts trivially on 
$\pi_i(M) \otimes \bZ_2$ for $i \geq 1$. 
	Then the holonomy group and the fundamental group of
	cannot be a nontrivially twisted finite extension of $\bZ$. 
\end{cor}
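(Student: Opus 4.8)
The plan is to argue by contradiction. Suppose $M$ is a closed $\rpn$-manifold satisfying the stated $\pi_1$-action condition whose fundamental group $G=\pi_1(M)$ fits into a nontrivially twisted extension $1\to\bZ\to G\to H\to1$ with $H$ finite, and let $\rho\colon G\to\SLnp$ be its holonomy. Write $\bZ=\langle\gamma\rangle$. Since the $H$-action on $\bZ$ factors through $\mathrm{Aut}(\bZ)=\{\pm1\}$ and is nontrivial, there is $\delta\in G$ with $\delta\gamma\delta^{-1}=\gamma^{-1}$. First I would pass to the finite regular cover $\hat M\to M$ associated to $\langle\gamma\rangle\trianglelefteq G$; this is again a closed $\rpn$-manifold, with deck group $H$ and cyclic holonomy $\langle\rho(\gamma)\rangle$. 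Using octantizability of $\rpn$-manifolds with nilpotent (here abelian) holonomy I would reduce to the case where $g:=\rho(\gamma)$ has infinite order, so that $\hat M$ has infinite cyclic holonomy and the recalled classification applies: $\hat M$ is a generalized Hopf manifold of some index $(p,q)$ with $p+q=n-1$, or a pure Hopf manifold ($p=0$). In every case $g$ preserves a splitting $\bR^{n+1}=V\oplus W$ with $\dim V=p+1$, $\dim W=q+1$, where $\mathbb{P}(V),\mathbb{P}(W)$ carry the two invariant totally geodesic spheres $S^p,S^q$, and proper discontinuity of $\langle g\rangle$ forces the eigenvalues of $g$ on $V$ and on $W$ to have two distinct moduli $r_V\neq r_W$.

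Next I would feed in the twist. Applying $\rho$ to $\delta\gamma\delta^{-1}=\gamma^{-1}$ gives $\rho(\delta)\,g\,\rho(\delta)^{-1}=g^{-1}$, so $g$ is conjugate in $\SLnp$ to $g^{-1}$ and its multiset of eigenvalue moduli is invariant under $r\mapsto r^{-1}$. The two moduli-blocks have sizes $p+1$ and $q+1$. If $p\neq q$ the blocks have distinct multiplicities, so the involution $r\mapsto r^{-1}$ must preserve each block, forcing $r_V=r_V^{-1}$ and $r_W=r_W^{-1}$, i.e. $r_V=r_W=1$, contradicting $r_V\neq r_W$; in particular the pure Hopf case $p=0$ (which covers the Cooper--Goldman manifolds $\rpn\#\rpn$) is eliminated outright, with no topological input. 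Hence $p=q$ (so $n=2p+1$, $p\ge1$, and $r_Vr_W=1$), and the conjugator $\rho(\delta)$ must carry the modulus-$r_V$ generalized eigenspace $V$ onto the modulus-$r_V^{-1}=r_W$ generalized eigenspace $W$; that is, $\rho(\delta)$ swaps $V\leftrightarrow W$ and therefore interchanges the two cores $S^p\leftrightarrow S^q$.

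Finally I would extract the topological contradiction from this swap. Since $\delta$ normalizes $\langle\gamma\rangle$, it descends to a deck transformation of $\hat M$, which through the developing map is covered by the projective map $\rho(\delta)$; hence this deck transformation interchanges $S^p$ and $S^q$. Under the identification $\SI^n\setminus S^p\setminus S^q\cong\SI^p\times\SI^q\times\bR$ these cores are precisely the two sphere factors, which for $p\ge2$ generate $\pi_p(\hat M)\cong\bZ\oplus\bZ$. Thus $\delta$ acts on $\pi_p(M)=\pi_p(\hat M)$ by transposing the two summands, and after tensoring with $\bZ/2$ this transposition is the nonidentity permutation of $(\bZ/2)^2$, hence nontrivial. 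This contradicts the hypothesis that $\pi_1(M)$ acts trivially on $\pi_p(M)\otimes\bZ/2$, and the proof is complete.

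The step I expect to be the main obstacle is the last one: one must justify rigorously, using octantizability and the explicit Hopf geometry, that the purely algebraic swap $V\leftrightarrow W$ of eigenspaces is realized as an honest interchange of homotopy generators of $\hat M$, and separately that the finite-holonomy possibility is excluded so that $g$ genuinely has infinite order. A small edge case also needs attention, namely $p=q=1$ ($n=3$): here one notes that a generalized Hopf manifold of index $(1,1)$ does not carry infinite cyclic holonomy, so for $n=3$ only the pure Hopf possibility survives, and that was already ruled out by the eigenvalue argument above.
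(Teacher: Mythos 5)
Your proposal has two genuine gaps, either of which is fatal as written. First, you misquote the classification: Corollary \ref{cor:mainp} (equivalently Theorem \ref{thm:main} and Proposition \ref{prop:Smillie}) allows \emph{three} types for a closed manifold with infinite cyclic holonomy --- pure Hopf, generalized Hopf, \emph{or pure Benoist} --- and your proof silently drops the pure Benoist case. A pure Benoist manifold is not projectively covered by any $\SI^n\setminus S^p\setminus S^q$, so there is no splitting $\bR^{n+1}=V\oplus W$ with two invariant cores to which your eigenvalue discussion could even be applied. The paper spends most of Section \ref{subsec:generalcov} on exactly this case: one passes to the Kuiper completion of the holonomy cover, observes that its ideal boundary is the disjoint union of an $(n-1)$-sphere and a segment, and notes that the deck transformation $\delta$ inverting the cyclic group would have to interchange these two components, which is absurd. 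Nothing in your proposal substitutes for this.

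Second, the eigenvalue step you do give rests on a false premise. Proper discontinuity (and cocompactness) of $\langle g\rangle$ on a Hopf domain does \emph{not} force the eigenvalues of $g$ on $V$ and on $W$ to have single moduli $r_V\neq r_W$; it only forces every modulus on $V$ to be strictly smaller than every modulus on $W$ (this is the criterion from Corollaries 1.2.3--1.2.4 of \cite{Kt} invoked in the paper). Hence your ``inversion-invariant multiset plus unequal multiplicities'' argument collapses: $g=\mathrm{diag}(1/8,1/4,4,8)$ acts properly, freely and cocompactly on a component of $\SI^3\setminus S^0\setminus S^2$ (so $p=0$, $q=2$), yet its spectrum $\{1/8,1/4,4,8\}$ is inversion-invariant, so $g$ \emph{is} conjugate to $g^{-1}$ in $\SLf$. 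So the pure Hopf case --- i.e.\ exactly the Cooper--Goldman manifolds --- is not ``eliminated outright with no topological input.'' The correct obstruction, which is what the paper uses, is geometric rather than purely algebraic: $\rho(\delta)$ preserves the developing image, hence permutes the two ends (equivalently the two ideal boundary components), and conjugating $g$ to $g^{-1}$ forces it to \emph{swap} them; this is impossible when they are non-homeomorphic (a point pair versus $S^{n-1}$, or $S^p$ versus $S^q$ with $p\neq q$), and when $p=q$ it is exactly where the hypothesis on $\pi_i(M)\otimes\bZ_2$ enters. Relatedly, your $n=3$ escape hatch is wrong: generalized Hopf $3$-manifolds of index $(1,1)$ with infinite cyclic holonomy do exist (e.g.\ the quotient of $\SI^3\setminus S^1\setminus S^1$ by $\mathrm{diag}(1,2,4,8)$), and this is precisely the case where the paper must use the triviality of the action on $\pi_1(M'_h)\otimes\bZ_2$; as written your proof has no valid conclusion for $n=3$. (Minor further gaps: you never actually rule out $\rho(\gamma)$ having finite order --- the paper does this via finite holonomy $\Rightarrow$ invariant spherical metric $\Rightarrow$ finite $\pi_1$ --- and you address only the fundamental-group half of the statement, not the holonomy half.)
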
 
See Theorem 8 of Section 3 of Chapter 3 of \cite{Spanier} for the action 
of $\pi_1(M)$. This result is more general than the work of Cooper, Goldman, and 
\c{C}oban since their conditions are stronger than the above fundamental-group condition. 
But of course this is a field with a very few examples due to the classification 
by Proposition 4.9 of Benoist \cite{BenTor}. 



For proof, we will use Proposition 4.9 of Benoist \cite{BenTor}, which classifies
closed real projective manifolds with cyclic holonomy groups. 
However, the proof contained Lemma 4.11 which is incorrect if there is nonempty boundary. 
We explain this in Section \ref{subsec:counter}.  
Of course, the lemma itself is right if we assume that $M$ is a closed manifold with 
a cyclic holonomy by our results, and the overall philosophy is quite correct. 


In addition, the final part of the proof of Proposition 4.9 of Benoist \cite{BenTor} is 
very slim giving us ideas only. Also, there is no mention of the cases when the generators 
have nonreal eigenvalues. 
We needed to fill these in by Lemma \ref{lem:persist}. 

We assume that the holonomy is regular diagonalizable. Then Lemma 4.11 
of \cite{BenTor} is correct as proved in Example 5.10 of the same paper. 
There might be a way to directly do these; However, 
for general cases, we will use perturbations following the idea of Cooper and 
Goldman \cite{CG15}. 

We will reprove Proposition 4.9 in \cite{BenTor} by Proposition \ref{prop:mainp}
following from Lemma \ref{lem:persist}. 





For history and the prospects of this field, we note that Nagano-Yagi \cite{NY} classified affine structures on tori, and Goldman \cite{Gthesis} did the same for real projective structures on tori. Smillie \cite{Smillie2} studied the affine manifolds with diagonalizable holonomy, giving us some means to understand these. (See Nam \cite{Nam} also.) Choi and Goldman classified the real projective structures on closed surfaces using some of these ideas. (See \cite{CG93}).
Benoist \cite{Benoist94} and \cite{BenTor} made significant progress on real projective and affine structures on nilmanifolds. Also, Dupont \cite{Dupont} classified real projective structures on the class of $3$-manifold that are quotients of Sol by lattices. 

We believe that there will be further progress of some significance in this field: 
One interesting question of Benoist \cite{BenTor} is whether for any closed manifold $M$, there exists an $n$-dimensional torus $T^n$ so that $M \times T^n$ admits an affine or real projective structure with a linear diagonalizable holonomy group. 

Recently, Dennis Sullivan has produced some types of projective structure on 
$\rpn \# \rpn$ which he talked about informally at a Stony Brook class in 2024. However, so far he has not claimed that this is a real projective structure in our sense given here using charts.
These might be modeled on some other geometries.  



\subsection{Comments on the results of Benoist} 

To better understand the results of this paper, one probably needs to understand the paper \cite{BenTor} and the more technical preliminary paper \cite{Benoist94} of Benoist.  

We give a more detailed summary of \cite{Benoist94} in Sections \ref{subsec:auto}. Roughly speaking, 
his main tool here is that one can always lift any projective action of a nilpotent or abelian group to the local action of an identity component of 
the nilpotent Zariski closure $I$. 
This is essentially because the actions are proper on its orbits.
Hence, the orbit of a covering group of $I$ maps to an $I$-orbit in $\SI^n$ as a covering map
mainly because the action is proper on an orbit. 
This extends the old idea of Nagano-Yagi \cite{NY}, which are just the abelian cases
of this phenomenon.

Now, we try to summarize \cite{BenTor}. 
Section 1 of \cite{BenTor} reviews the affine and real projective structures particularly for $2$-tori. 
A good summary of the second paper \cite{Benoist94} needed here is in Section 2 of \cite{BenTor}. In fact, we will only need positive digonalizable linear Lie groups rather than the full theory. This is also well explained in Sections 3 and 5 of \cite{BenTor}. 
The main aim of Benoist was to extend the $2$-dimensional results to higher-dimensional manifolds 
with nilpotent holonomy groups. 
However, Section 4 of \cite{BenTor} contains some claims that need 
further justifications on which some people currently are working. These claims of Benoist have not been exploited anywhere as of now.


\subsection{Outline} 


In Section \ref{sec:geostr}, we will review some facts on geometric structures, 
and the octantization of Benoist: 
For the first half of this paper, we are trying to reprove Theorem 
4.9 of \cite{BenTor} classifying the real projective manifolds with infinite-cyclic holonomy groups.
This is Proposition \ref{prop:mainp}. 
We will approach this in two steps by first proving Theorem \ref{thm:main} for 
regular diagonalizable holonomy manifolds and then we prove the corollary using 
perturbation.

In Section \ref{subsec:auto}, 
we summarize some nilpotent Lie group action materials from \cite{Benoist94}. 
In Section \ref{subsec:Benoist}, we discuss the constructions of real projective manifolds 
by Benoist \cite{BenTor}.  
In Section \ref{subsec:split}, we discuss the splitting of real projective manifolds along
\cite{psconv}. Then we will define elementary Benoist manifolds and Benoist manifolds, 
as discovered first by Benoist. 
In Section \ref{subsec:top}, we will go into the topology of Hopf and Benoist manifolds.
In Section \ref{subsec:Prop4.9}, we restate Proposition 4.9 of \cite{BenTor}, which will be fully proved in Section \ref{subsec:general}. 

In Section \ref{sec:delta}, we will discuss manifolds
whose holonomy group is infinitely cyclic and regular diagonalizable. 
In Section \ref{subsec:simplices}, we discuss the possibility of 
the union of a simplex and its sides so that the resulting 
quotient space is a compact manifold with corner and lifts to the universal cover. 
In Section \ref{subsec:ambient}, we show that such a union always implies the existence of some canonical ambient manifolds in the universal cover. 
In Section \ref{subsec:canonical}, 
we classify closed real projective manifolds with infinite-cyclic regular diagonalizable holonomy groups. This proves Theorem \ref{thm:main}. 
In Section \ref{subsec:general}, we will present a proof for Proposition \ref{prop:mainp}. This proof is based on the concept introduced by Cooper and Goldman, involving the perturbation of holonomy groups to diagonalizable groups.

For the rest of the paper, we are trying to prove our nonexistence results. The main tool is the Kuiper completion for geometric structure using the theories in our old work \cite{psconv}.
In Section \ref{sec:obstruct}, we establish the proof for Corollary \ref{cor:main}.
In Section \ref{subsec:generalcov}, we will present the general case, utilizing the Kuiper completion of the holonomy cover.
In Section \ref{subsec:counter}, we will discuss some illustrative examples.

We thank Yves Benoist, Daryl Cooper, William Goldman, Arielle Leitner, and Max Riestenberg for helpful comments regarding the results of this paper.


\section{Geometric structures and Benoist manifolds} \label{sec:geostr}

\subsection{Geometric structures and the deformation spaces}\label{subsec:geostr} 

We let $\Pi: \bR^{n+1} - \{O\} \ra \SI^n$ denote the natural projection. 
$\SL_pm(n+1, \bR)$ acts on $\SI^n$ by an induced action on $\bR^{n+1}$. 
Let $X = \SI^n$ and $G = \SL_\pm(n+1, \bR)$ act on $X$.  
We consider the pair $(\SI^n, \SL_\pm(n+1, \bR))$ as the real projective geometry. 

An {\em affine patch} in $\SI^n$ is an open $n$-hemisphere identifiable with an 
affine subspace of $\rpn$ under the double cover. 
A {\em simplex} in $\SI^n$ is the convex hull of a finite set of points in a general position 
in an affine patch of $\SI^n$. 
An {\em open simplex} is the manifold-interior of a simplex considered as a manifold with corner. 
Let $\Delta$ denote a group of positive diagonal matrices with respect to a coordinate system. Each open orbit of $\Delta$ is $X$ is called an {\em $n$-octant}, which is an $n$-simplex.
It is easy to see that each open side of an $n$-octant is an orbit of $\Delta$ as well. 

We follow Section 1 of \cite{Benoist94}.  
A real projective structure on a manifold $M$ is given by a maximal atlas of charts to $\SI^n$ with transition maps in $ \SL_\pm(n+1, \bR)$. 
Given a manifold $M$, we are going to denote by $\tilde M$ its universal cover. 
With a real projective structure, there is an associated pair of 
an immersion $\dev: \tilde M \ra \SI^n$ and
a homomorphism $h: \pi_1(M) \ra G$
where $\dev \circ g = h(g) \circ \dev$ for each $g \in \pi_1(M)$. 
The pair $(\dev, h(\cdot))$ is determined up to the action 
\[g \cdot (\dev, h(\cdot)) = (g\circ \dev, g \circ h(\cdot) \circ g^{-1}) \hbox{ for }
g \in G.\] 

In our paper, we will use the {\em holonomy cover} $M_h:= \tilde M/K_h$ where 
$K_h$ is the kernel of $h$. 
Then $\Gamma_h:= \pi_1(M)/K_h$ acts as a deck transformation group of $M_h \ra M$. 
There is a pair of an immersion and a homomorphism 
\[(\dev_h: M_h \ra \SI^n, h_h: \pi_1(M)/K_h \ra \SL_\pm(n+1, \bR))\]
satisfying $\dev_h \circ \gamma = h_h(\gamma)\circ \dev_h$ for 
each $\gamma \in \Gamma_h$. 


Let $M_{K'}:= \tilde M/K'$ where $K_h\supset K'$. 
Covers such as $M_{K'}$ are said to be the {\em covers of holonomy covers}. 
Let $N(K')$ denote the smallest subgroup containing $K'$ in $\pi_1(M)$. 
Then there is an immersion $\dev_{K'}:M' \ra \SI^n$ induced from $\dev$ and 
$h_{K'}:\pi_1(M)/N(K') \ra \SL_\pm(n+1, \bR)$ induced from $h$. 
Let $\Gamma_{K'}$ denote a deck transformation group.
acting on $M_{K'}\ra M$. Note that $\Gamma_{K'}$ is isomorphic to $N(K')/K'$
where $N(K')$ is the normalizer of $K'$. 



For a closed manifold $N$, 
a {\em primitive deformation space} $\Def''(N)$ of 
{\em real projective structures} is defined as 
the space of $(D, h)$ where 
\begin{itemize} 
\item $D: \tilde N \ra \SI^n$ is an immersion, and 
\item $h: \pi_1(N) \ra \SL_\pm(n+1, \bR)$ satisfying 
$D\circ \gamma = h(\gamma) \circ D$ for every $\gamma \in \pi_1(N)$.
\end{itemize} 
The topology is given by the $C^r$-topology for $r \geq 2$.  
An isotopy $\iota: \tilde N \ra \tilde N$ is {\em equivariant} if 
$\gamma \circ \iota = \iota \circ \gamma$ for any $\gamma \in \pi_1(N)$. 
The group $I_{\pi_1(N)}(\tilde N)$ of equivariant isotopies on $\tilde N$ acts on 
$\Def''(N)$ by the right action
\[(D, h) \mapsto (D\circ \iota, h) \hbox{ for } \iota \in I_{\pi_1(N)}(\tilde N).\] 
We give the quotient topology to the quotient space $\Def'(N)$ of $\Def''(N)$.

The Ehresmann-Thurston principle states that 
\begin{equation}\label{eqn:ET} 
\Def'(M) \ra \Hom(\pi_1(M'), \SL_\pm(n+1, \bR))
\end{equation} 
is a local homeomorphism for a closed $n$-manifold $M$.
(See \cite{Goldman88}, \cite{dgorb}, and \cite{CLM18}.) 
The quotient of this space by the conjugation action of $\SL_\pm(n+1, \bR)$ is the 
{\em deformation space} $\Def(N)$ of $N$. 
(Note that the conjugation action is the left action and the isotopies are the right action, and so they commute.)

The {\em open join} of two disjoint sets $A$ and $B$ on $\SI^n$ 
is given as 
\[ 
A \ast_o B := \{ [t \vec{x} + (1-t) \vec{y}]| [\vec{x}] \in A, [\vec{y}]\in B, t \in (0, 1)\}.
\]
The {\em open join} of two abstract disjoint sets $A$ and $B$
is given as 
\[ 
A \ast_o B := \{ (x, y, t)| x\in A, y\in B, t\in (0, 1)\}.
\]
Of course, this is diffeomorphic to $A\times B \times (0, 1)$ as a smooth manifold
when $A$ and $B$ are smooth manifolds.

We will need: 
\begin{lem}\label{lem:p=1} 
Let $M$ be a generalized Hopf manifold of type $(p, q)$ 
with $1 \leq p \leq q \leq n-2$ and $p+q = n-1$ 
with infinite-cyclic $h(\pi_1(M))$. 
Then we obtain that 
$\pi_1(M) = \langle \gamma, K\rangle$ for 
an element $\gamma \in \pi_1(M)$ and the kernel $K$ of $h$, 
and the following hold\/{\em :} 
\begin{itemize} 
\item the induced map
$\dev_h: M_h \ra \SI^n \setminus S^p \setminus S^q$ 
from the developing map $\dev$ is a finite covering map
for two mutually disjoint great spheres $S^p$ of dimension $p$ and $S^q$ of dimension $q$,
and $M_h$ is diffeomorphic to $\hat S^p \times \hat S^q \times \bR$ for a finite 
covers $\hat S^p$ of $S^p$ and $\hat S^q$ of $S^q$. 
\item if $p \geq 2$, then $\dev_h$ is a diffeomorphism and $M_h = \tilde M$,
and $\tilde M$ is diffeomorphic to $S^p \times S^q \times \bR$. 
\end{itemize} 
\end{lem}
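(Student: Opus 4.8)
The plan is to exhibit $\dev_h$ as a finite covering onto $\Omega:=\SI^n\setminus S^p\setminus S^q$ and then read off the topology. First I would record the geometry of $\Omega$. Since $S^p$ and $S^q$ are disjoint great spheres with $p+q=n-1$, the linear subspaces $V_p,V_q\subset\bR^{n+1}$ of dimensions $p+1$ and $q+1$ are complementary, so $\SI^n$ is the join $S^p\ast S^q$ and $\Omega=S^p\ast_o S^q$ is the open join; concretely $\Omega$ is diffeomorphic to $S^p\times S^q\times\bR$, whence $\pi_1(\Omega)\cong\pi_1(S^p)\times\pi_1(S^q)$. By the definition of a generalized Hopf manifold of type $(p,q)$, the manifold $M$ is projectively covered by the universal cover $\widetilde{\Omega}$ of $\Omega$, so $\tilde M=\widetilde{\Omega}$ and $\dev$ is the universal covering $\widetilde{\Omega}\to\Omega$ followed by the inclusion $\Omega\hookrightarrow\SI^n$. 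Because $\dev\circ k=h(k)\circ\dev=\dev$ for every $k$ in the kernel $K=K_h$, each such $k$ acts on $\tilde M=\widetilde{\Omega}$ as a deck transformation of this universal covering; since the $\pi_1(M)$-action is free, this gives an injection $K_h\hookrightarrow\pi_1(\Omega)$ and exhibits the induced map $\dev_h:M_h=\tilde M/K_h\to\Omega$ as a (regular, as $\pi_1(\Omega)$ is abelian) covering map with deck group $\pi_1(\Omega)/K_h$.

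The heart of the argument is to prove $[\pi_1(\Omega):K_h]<\infty$. Here I would use that $M=M_h/\Gamma_h$ is a closed $n$-manifold with $\Gamma_h\cong\bZ$, so $\pi_1(M)$ sits in an extension $1\to K_h\to\pi_1(M)\to\bZ\to 1$ in which $K_h$ is free abelian of some rank $r\le c$, where $c$ (the rank of $\pi_1(\Omega)$) is the number of circle factors among $S^p,S^q$. Feeding the free, cocompact action of $\pi_1(M)$ on $\tilde M$ into the Cartan--Leray spectral sequence $H^s(\pi_1(M);H^t(\tilde M;\bZ_2))\Rightarrow H^{s+t}(M;\bZ_2)$, I would invoke two bounds: $H^t(\tilde M;\bZ_2)$ vanishes above $t_{\max}=(n-1)-c$, because $\tilde M\simeq\widetilde{S^p}\times\widetilde{S^q}$ collapses each circle factor to a point, and $\mathrm{cd}_{\bZ_2}(\pi_1(M))\le r+1$ from the extension above. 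Hence $H^{s+t}(M;\bZ_2)=0$ for $s+t>n+(r-c)$. As $M$ is a closed $n$-manifold, $H^n(M;\bZ_2)\ne 0$, which forces $r=c$; that is, $K_h$ has finite index in $\pi_1(\Omega)$ and $\dev_h$ is a finite covering.

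It then remains to read off the topology. As a finite cover of $\Omega\cong S^p\times S^q\times\bR$, equivalently a finite cover of $S^p\times S^q$ crossed with $\bR$, the space $M_h$ is diffeomorphic to $\hat S^p\times\hat S^q\times\bR$, where $\hat S^p\to S^p$ and $\hat S^q\to S^q$ are finite covers (the identity on any sphere of dimension $\ge 2$, and a circle over a circle factor); this is the first bullet. For the second bullet, when $p\ge 2$ we also have $q\ge 2$, both spheres are simply connected, $\pi_1(\Omega)$ is trivial, and therefore $K_h=1$ and $M_h=\tilde M$. A finite covering of the simply connected $\Omega$ is a diffeomorphism, so $\dev_h$ is a diffeomorphism and $\tilde M\cong\Omega\cong S^p\times S^q\times\bR$.

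The step I expect to be the main obstacle is precisely the finiteness $[\pi_1(\Omega):K_h]<\infty$: ruling out an infinite covering is what forces one to use the global hypothesis that $M$ is closed rather than only the local structure of $\dev$. The Cartan--Leray bound above is the cleanest route I know, but the same conclusion can be reached more geometrically by using the Hopf dynamics built into the definition: if $A:=h(\gamma)$ translates the radial join coordinate of $\Omega$, then $\Omega/\langle A\rangle\cong S^p\times S^q\times\SI^1$ is a closed manifold and $\dev_h$ descends to a local diffeomorphism $M\to\Omega/\langle A\rangle$ between closed $n$-manifolds, which is automatically a finite covering; pulling back along the two infinite cyclic covers then shows $\dev_h$ itself is a finite covering.
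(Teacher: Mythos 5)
Your proposal is correct, and its setup coincides with the paper's: identify $\tilde M$ with the universal cover of $\Omega = \SI^n\setminus S^p\setminus S^q \cong S^p\times S^q\times \bR$, observe that the kernel $K_h$ acts by deck transformations of $\tilde\Omega\to\Omega$ (so embeds in $\pi_1(\Omega)$), and conclude that $\dev_h$ is a covering with deck group $\pi_1(\Omega)/K_h$; the $p\geq 2$ case is then immediate in both treatments. Where you genuinely diverge is at the crux, the finiteness of $[\pi_1(\Omega):K_h]$. The paper argues geometrically: writing $\tilde M$ as the open join $\tilde S^1 \ast_o \tilde S^{n-2}$ of universal covers, it asserts that $K$ must act cocompactly on the factors ``since otherwise, we cannot get the cocompactness of the action on the entire space,'' and finite index follows; this is terse and leaves the key implication to the reader. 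You instead run the Cartan--Leray spectral sequence with $\bZ_2$ coefficients, bound $\mathrm{cd}_{\bZ_2}\pi_1(M)\le r+1$ via the extension $1\to K_h\to\pi_1(M)\to\bZ\to 1$ (which also shows $\pi_1(M)$ is torsion-free, so the bound is meaningful), bound the top nonvanishing cohomology of $\tilde M$ by $(n-1)-c$, and use $H^n(M;\bZ_2)\ne 0$ to force $r=c$, hence finite index. This is rigorous, treats all $(p,q)$ uniformly, needs no dynamical information about the holonomy, and with $\bZ_2$ coefficients requires no orientability; it is a clean, self-contained replacement for exactly the step the paper leaves implicit. The trade-off is that the paper's route, once fleshed out, stays elementary (cocompactness of a product action), whereas yours imports group cohomology.

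Two small caveats. First, your parenthetical ``more geometric'' alternative at the end is not a complete substitute: that $A=h(\gamma)$ acts freely and properly on $\Omega$ (e.g.\ translating the join coordinate, so that $\Omega/\langle A\rangle$ is a closed manifold) is not among the lemma's hypotheses; it is dynamical information of the kind established only later (cf.\ Proposition \ref{prop:Smillie}), so the descent of $\dev_h$ to a map of closed manifolds cannot be invoked at this stage. Second, in the case $p=q=1$ the finite-index subgroup $K_h\le\bZ^2$ need not be a product of subgroups of the two circle factors, so $\dev_h$ need not respect the product decomposition; but the corresponding cover is still diffeomorphic to $\hat S^1\times\hat S^1\times\bR$, which is all the lemma claims, so your conclusion stands.
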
 
\begin{proof} 
Suppose $p=1, q=n-2$. 
Here, $\pi_1(M) = \langle \gamma, K \rangle$. 
Since 
$\SI^n \setminus S^1 \setminus S^{n-2}$ equals 
$S^1\ast_o S^{n-2}$, 
$\tilde M$ can be identified as a join 
$\tilde S^1 \ast_o \tilde S^{n-2}$
for universal covers $\tilde S^1$ and $\tilde S^{n-2}$. 
Since $K$ acts trivially on $S^1$ and $S^{n-2}$ having the trivial holonomy image,
it acts as deck transformation groups on $\tilde S^1$ and $\tilde S^{n-2}$ respectively.
Now, $K$ acts on $\tilde S^1\times \tilde S^{n-2}$
cocompactly since otherwise we have only automorphisms 
induced by $\gamma^n, n\in \bZ,$
and we cannot obtain the cocompactness of the action on 
the entire space. 
Hence, the conclusion follows.  

If $p\geq 2$, then $2 \leq p \leq q \leq n-3$ and $S^p$ and $S^q$ are both simply connected, 
and $M_h =\tilde M$. Since $\dev_h$ is a covering map to $\SI^n \setminus S^p \setminus S^q$, it has to be a diffeomorphism. 
\end{proof} 

\subsection{Projective automorphism groups} \label{subsec:auto} 
We summarize some materials needed from \cite{Benoist94}. 


\begin{defn} \label{defn:Is} 
Let $W$ be a real projective manifold with a projective immersion $i_W:W \ra \SI^n$. 
An automorphism $\phi: W \ra W$ is {\em projective} if 
there is an element $\phi'\in  \SL_\pm(n+1, \bR)$ so that 
$i_W \circ \phi = \phi' \circ i_W$. 
\begin{itemize} 
\item We denote by $Is(W)$ the group of projective automorphisms of $W$
and by $Is_0(W)$ the identity component of $Is(W)$.
\item There is a homomorphism $h_{i_W}: Is(W) \ra \SL_\pm(n+1, \bR)$
defined by $h_{i_W}(\phi) \circ i_W = i_W \circ \phi$ for $\phi \in Is(W)$. 
\item We denote by $K_{i_W}$ the subgroup of elements $\phi$ 
of $Is(W)$ where $h_{i_W}(\phi) = \Idd$. 
\item We define  $K_{i_W, 0} = K_{i_W} \cap Is_0(W)$,  
$Is(i_W)  := h_{i_W}(Is(W)) \cong Is(W)/K_{i_W}$, and  
$Is_0(i_W):= h_{i_W}(Is_0(W)) \cong Is_0(W)/K_{i_W, 0}$,
the identity component of $Is(i_W)$. 
\item For a subgroup $L$ of $Is(i_W)$, 
we denote by $Is_L(W)$ 
the subgroup of $Is(W)$ whose image under $h_{i_W}$ is in $L$. 
\end{itemize} 
\end{defn} 
Note $K_{i_W}$ is discrete since it goes to the trivial element of $\SL_\pm(n+1,\bR)$. 

For a real projective manifold $M$, 
we give as usual the analytic structure on $\tilde M$ induced by 
$\dev$ and the analytic structure descends to $M$  since the holonomy group action 
is analytic.

We say that a group $G$ acting on $\SI^n$ {\em lifts} 
to $M':= M/K'$ where $K' \subset K_h$.  
if there is a covering group $\rho:G' \ra G$ with the kernel $K_{G'}$ 
that acts on $M'$ so that for each element $g\in G'$, 
$\dev \circ g = g'\circ \dev$ for element $g'\in G$. 
Here, we say that $G'$ is a {\em lift} group. 

An orbit $\mathcal{O}$ of $G$ {\em lifts} if a covering group $\tilde G$
lifts with an orbit $\tilde{\mathcal{O}}$ so that 
$\dev|\tilde{\mathcal{O}} \ra \mathcal{O}$ is a covering map.

Let $M$ be a closed $\rpn$-manifold with the nilpotent holonomy group $H$.
We can choose a maximal connected nilpotent subgroup $N$ of $G$ so that 
$[H: H\cap N]$ is finite.  We can do this by taking the identity component of the Zariski closure of $H$ and taking an ambient maximal connected nilpotent subgroup.

\begin{lem}[Lemma 2.1 of \cite{Benoist94}]\label{lem:lifting} 
Suppose that $M$ is a closed $\rpn$-manifold with a nilpotent holonomy group. 
Then for a choice of a maximal connected nilpotent Lie group $N$ of $G$
so that \[[h(\pi_1(M)): h(\pi_1(M)) \cap N] < \infty,\]  
we have $N \subset Is_0(\dev), Is_0(\dev_h)$ 
\end{lem} 
We added $Is_0(\dev_h)$ because there is no difference in the proofs and will continue to do so afterward. 

We say that $M$ is {\em $n$-octantizable} if $\Delta \subset Is(\dev)$ for a diagonal matrix group 
for a choice of frame of $\bR^{n+1}$.  
Since $\Delta$ is simply connected, this condition is equivalent to 
$\Delta \subset Is(\dev_{K'})$ for any $K'$ where $K_h \supset K'$. 
It is easy to see that $Is(\dev_h)$ contains a copy of $\Delta$
as a subgroup, and hence $M_h$ is a union of mutually disjoint orbits of $\Delta$.
In addition, $\Delta \subset Is(\dev_{K'})$ and $M_{K'}$ is a union of mutually disjoint orbits of
$\Delta$. 

We say that a diagonalizable projective automorphism with $n+1$ mutually distinct eigenvalue norms is {\em regular diagonalizable}. Any group generated by such an element is said to be {\em regular diagonalizable}.

\begin{prop}\label{prop:diagc} 
Suppose that $h(\pi_1(M))$ is generated by a regular positive diagonalizable element. Then for the unique full positive diagonalizable group $\Delta$ containing 
$h(\pi_1(M))$, we have $Is_0(\dev_{K'}) = \Delta$, and the $\Delta$-action lifts to $M_{K'}$ for $K_h \supset K'$ 
and $\tilde M$ is a union of $\Delta$-orbits that 
maps diffeomorphic to open simplices under $\dev_{K'}$. 
\end{prop} 
\begin{proof} 
It is clear that $\Delta$ is the unique maximal connected nilpotent subgroup. 
Hence, $\Delta \subset Is_0(\dev)$ and $\subset Is_0(\dev_{K'})$. 
Since $\Delta$ is simply connected, the $\Delta$-action lifts to $\tilde M$ and, hence, to 
$M_{K'}$. Also, Since $\Delta$ acts freely on a simplex, 
each $\Delta$-orbit maps to a simplex in $\SI^n$. 
\end{proof} 

\begin{prop}[from Proposition 1 of Benoist \cite{Benoist94}]  \label{prop:oct1} 
A closed $\rpn$-manifold $M$ with a diagonalizable holonomy
is octantizable. \qed
\end{prop} 


We denote by $\clo_X(A)$ the closure of a subset $A$ in the space $X$. 
Heuristically, the following Propositions \ref{prop:oct2} and \ref{prop:Lem3_1}. 
gives us a decomposition of a finite cover of a
closed octantizable $\rpn$-manifold into ``bricks''.

\begin{prop}[Proposition 2 of Benoist \cite{Benoist94}] \label{prop:oct2} 
Let $M$ be an octantizable closed $\rpn$-manifold. 
Let $M_h$ be a holonomy cover. 
Let $\hat \Omega$ denote an open orbit of $Is_0(M_h)$ in $M_h$. 
\begin{itemize} 
\item The restriction $\dev|  \clo_{M_h}(\hat \Omega)$ to its image is a covering 
with the deck transformation group $K_{\dev, 0}$. 
\item Let $\Gamma_{K, 0} = \Gamma \cap \langle K, Is_0(M_h)\rangle$, and let 
$\Gamma_{0}:= \Gamma \cap Is_0(M_h)$. 
Then $\clo_{M_h}(\hat \Omega)/\Gamma_{0}$ is a manifold with corner, and 
embeds in a compact submanifold with corner in $M_h/\Gamma_{K, 0}$ under 
the projection $M_h \ra M_h/\Gamma_{K,0}$.
\end{itemize} 
\end{prop}

\begin{prop}[Lemma 3.1 of \cite{Benoist94}] \label{prop:Lem3_1}
	Let $M$ be a closed $\rpn$-manifold with a holonomy cover $M_h$
	with the deck transformation group $\Gamma$. 
	\begin{itemize} 
		\item 
$K$ and $Is_{Is_0(\dev) }(M_h)$ commute.
$Is_{Is_0(\dev)}(M_h)$ is identical with 
		$K \times_{K_{\dev,0}}  Is_0(M_h)  =  \langle K, Is_0(M_h)\rangle$.
		\item If $M$ is octantizable, then 
		$Is_{Is_0(\dev)}(M_h)$ is a finite-index subgroup $Is(M_h)$. 
		\item $\Gamma_{K, 0}= \Gamma \cap Is_{Is_0(\dev) }(M_h)$ is a finite-index subgroup of 
		$\Gamma$. 
		\end{itemize} 
	\item 
	\end{prop} 
We denote by $\Gamma_{K, 0}:= \Gamma  \cap Is_{Is_{\dev, 0 } }(M_h)$.  
We call $\clo_{M_h}(\hat \Omega)/\Gamma_{K, 0}$ a {\em brick}.

\subsection{Benoist manifolds or $A_g$ or $B_g$} \label{subsec:Benoist} 

For an expository purpose of this paper, we need to introduce some notations and names. 
These are by no means standard yet as there are such a few number of people working in 
this area. 


\begin{defn}[$A_g$ and $B_g$ in Example 4.8 of  Benoist \cite{BenTor}]
Let $g$ be an element of $\SL(n+1, \bR)$ with the norms of eigenvalues 
$\lambda_1, \dots, \lambda_{n+1}$ in an nonincreasing order where 
we repeat these according to the multiplicities. 

Suppose that $\lambda_i$ is an eigenvalue of $g$
with $E_0:= \ker(g - \lambda_i)$ with dimension $1$. 
For each eigenvalue $\alpha$ of $g$, we denote by 
$r_\alpha$ the its multiplicity in the characteristic polynomial of $g$. 
We define $E_+$ to be the direct sum of cyclic spaces $\ker (g - \alpha \Idd^r_\alpha)$ 
with $|\alpha| > \lambda_i$, 
and $E_-$ to be the direct sum of cyclic spaces 
$\ker (g - \alpha \Idd)^r_\alpha$ with $|\alpha| < \lambda_i$. 

Let us orient $E_0$ arbitrarily. 
We denote by $H$ the hemishere bounded by $\Pi(E_+ \oplus E_-)$ and 
the point of $\SI^n$ in the positive direction of $E_0$. 
We define $\Sigma_{\pm} := \Pi(E_\pm)\cap H$, and 
$\Sigma_{\pm, 0} := \Pi(E_\pm \oplus E_0)$.

We define \[U_\pm := H \setminus (\Sigma_{\pm} \cup \Sigma_{\mp, 0}).\]
We let $d_\pm: = \dim E^\pm$.
$U_+$ is diffeomorphic to $(0, 1) \times \SI^{d_+-1} \times B^{d_-}$, and 
$U_-$ to $(0, 1) \times B^{d_+} \times \SI^{d_--1}$, 
$\partial U_+ = \partial U_-$ is diffeomorphic to $(0, 1) \times \SI^{d_+ -1} \times \SI^{d_--1}$. 

For $d_+, d_-\geq 1$, $U_\pm$ is connected. 
$A_g := U_+/\langle g \rangle$ and $B_g:= U_-/\langle g \rangle$. 
if $d_+ =1$ or $d_- =1$, $U_\pm$ is disconnected.
We take a component $V_\pm$ and define $A_g:= V_+/\langle g \rangle$ and 
$B_g:= V_-/\langle g \rangle$. 
\end{defn}

A basic problem with this notation in this paper is that it does depend on $g$, and we have 
to change $g$ often.
We introduce the definitions for the purpose of this paper:

\begin{defn}\label{defn:eBenI}  
An {\em $n$-bihedron} $Q$ is the closure of a component of $\SI^n \setminus S \setminus S'$ for two transversal great $(n-1)$-spheres $S$ and $S'$. (See Chapter 3 of \cite{psconv}.) The union of two closed $(n-1)$-dimensional hemispheres $H'$ and $H''$ forms the boundary of $Q$. We refer to these as the {\em boundary $(n-1)$-hemispheres}. 
$S$ contains a boundary $(n-1)$-hemisphere, and let $q$ be a point in the interior of the other boundary $(n-1)$-hemisphere. 

{\em Elementary Benoist manifolds of type I} are compact $\mathbb{RP}^n$-manifolds covered by $Q \setminus S \setminus \{q\}$ for some choice of $Q, S, q$ as described above, where the deck transformation group is generated by projective transformations acting on $S$ and ${q, q_-}$.

The real projective manifolds that are real projectively diffeomorphic
to $Q \setminus S \setminus \{q\}$ are called {\em elementary pre-Benoist manifolds of type I}. 
\end{defn} 

\begin{defn} \label{defn:eBenII} 
Let $H^n$ be a closed n-hemisphere with its boundary containing a great sphere $S^{n-2}$ of dimension $n-2$. In addition, its interior intersects a great circle $S^1$ that is disjoint from $S^{n-2}$.

An {\em elementary manifold of Benoist type II} refers to a compact real projective manifold that is 
covered by a finite cover of $H^{n, o} \setminus S^{n-2} \setminus S^1$ such that the deck transformation group is generated by projective transformations acting on $S^{n-2}$, $S^1$, and $H^n$. (For $n\geq 4$, we do not need to consider finite covers.)

The real projective manifolds that are real projectively diffeomorphic
to a finite cover of $H^{n, o} \setminus S^{n-2} \setminus S^1$ 
are called {\em elementary pre-Benoist manifolds of type II}. 
\end{defn}

\begin{defn} \label{defn:eBenIII} 
Let $H^n$ be a closed $n$-hemisphere with its boundary containing a great sphere $S^{q}$ of dimension $q$, $1 \leq q \leq n-3$. 
Additionally, its interior $H^{n, o}$ intersects a great circle $S^p$ that is disjoint from $S^q$ where $p+q = n-1, 2 \leq p \leq n-2$. 

An {\em elementary manifold of Benoist type III} refers to a compact real projective manifold that is 
covered by a finite cover of $H^n \setminus S^p \setminus S^q$ such that the deck transformation group is generated by projective transformations acting on $S^p$, $S^q$, and $H^n$. 
Here, $(p, q)$ are said to be the characteristic of this elementary manifold of Benoist type III. (For $p \leq n-3$, we do not need to consider finite covers.)

The real projective manifolds projectively diffeomorphic 
to a finite cover of $H^{n, o} \setminus S^p \setminus S^q$ 
are called {\em elementary pre-Benoist manifolds of type III}. 
\end{defn} 

Notice that elementary pre-Benoist manifolds of type I and III have connected boundary, while elementary pre-Benoist manifold of type II has two boundary components. 
Each of these of type I and II 
has a boundary component that is a pure Hopf manifold of dimension $n-1$.
The type III one has a generalized Hopf manifold of dimension $n-1$ of type $(p-1, q)$
 as the boundary. 

For a regular positive diagonalizable 
projective automorphism $g$, Example 4.8 of \cite{BenTor} give
$A_g$ with $d_+=1$ and $B_g$ with $d_-=1$
 as elementary Benoist manifolds of type I. 
Other types termed $A_g$ or $B_g$ are all elementary Benoist manifods of type II or III. 


An example of an elementary Benoist manifold of type I can be constructed by taking a quotient of $Q \setminus S \setminus \{q\}$ by $\langle g \rangle$, where $g$ is a projective transformation acting on $S$ and $S'$, and $q$ is associated with the eigenvalue of largest or smallest norm with multiplicity $1$.





An example of an elementary Benoist manifold of type II can be constructed by taking a quotient of $H^{n, o} \setminus S^{n-2} \setminus S^1$ by $\langle g \rangle$, where $g$ is a projective transformation acting on $H^{n, o}$, $S^{n-2}$, and $S^1$. Here, $S^1$ and $S^{n-2}$ correspond to the direct sum components of Jordan decompositions, and the norms of the associated eigenvalues of one component are strictly larger than those of the other components.

\subsection{Splitting of real projective manifolds}  \label{subsec:split}
We say that a real projective manifold $M$ {\em splits} along
a totally geodesic locally two-sided $\mathbb{RP}^{n-1}$-manifold $N$ of codimension-one
to a real projective manifold $M'$ 
if $M \setminus N$ completes to $M'$ using the path metric on
$M \setminus N$
induced from the Riemannian metric of $M$.  (See \cite{psconv}.)
We say that $M$ {\em splits} into
components of $M'$. 
We say that $M$ {\em decomposes into $M'$ along $N$} if 
each component of $M'$ is real projectively isomorphic to 
closures of components of $M-N$. 
We may say that $M$ {\em decomposes} into components of $M'$ also.

Let a finite sequence of compact real projective manifolds $M_1, \dots, M_k$
be obtained as a decomposition of $M$ by splitting along a totally geodesic
hypersurface $S$. 
Suppose that each $M_i$ has at most two boundary components. 
Such is the case for pre-Benoist manfiolds and elementary Benoist manifolds. 
We can 
have only the following possibilities: 
\begin{itemize} 
\item We say that $M_1, \dots, M_k$
are {\em arranged} in a {\em  sequence} if 
each boundary of $M_1$ has exactly one or two boundary components and 
for each component of $S$, $M_i^o$ and $M_{i+1}^o$ are adjacent and 
$i$ is unique for that component. 
If the indices are given mod $k$, then we say that the sequence is {\em circular}.
Otherwise, it is {\em linear}. 
\item We say that $M_1, \dots, M_k$
are {\em arranged} in a {\em circular} (resp. {\em linear}) 
{\em sequence up to double coverings} 
if there is a one-fold or two-fold cover $M'$ of $M$ and 
a circular  (resp. linear) decomposition along the inverse image $S'$ of $S$.
\end{itemize}


\begin{defn} \label{defn:Benoist}
A closed real projective manifold that splits into finitely many
elementary Benoist manifolds is said to be a {\em Benoist manifold}. 
A {\em pure Benoist manifold} is a Benoist manifold that is not a generalized Hopf manifold
or a pure Hopf manifold. 
\end{defn}

An example of a Benoist manifold can be obtained by taking a finite sequence of type II ones with 
projective automorphism $g$ where
we need to switch between the hemispheres of dimension $n$ sharing the boundary components
and  finally  obtain the closed manifold by coming back to the beginning one by 
attaching projectively. 

Another example can be made by starting as above, but 
we can use two type I ones to  the end of a finite sequence. 
(These constructions of Benoist generalized the construction of Goldman \cite{Gthesis}.) 

We summarize some obvious properties of Benoist manifolds. 

\begin{prop}\label{prop:propBen}  $ $ 
Assume $n \geq 3$. 
Let $M$ be a Benoist manifold. 
Then the following holds\/{\em :} 
\begin{itemize} 
\item If a real projective manifold finitely covers $M$, it is Benoist also. 
\item $M$ is covered by a real projective manifold that decomposes into elementary
Benoist manifolds.  
\item  One of the following holds\/{\em :} 
\begin{itemize} 
\item $M$ is a pure Hopf or a generalized Hopf manifold. Furthermore, 
$\dev_h$ maps $M_h$ diffeomorphic to $H^{n, o} \setminus \{x\}$ for a hemisphere $H^n$ and an interior point $x$, or 
$\dev_h$ is a diffeomorphism or a finite covering map to $\SI^n\setminus S^p \setminus S^q$,  $p+q = n-1$, $2 \leq  p \leq n-2, 1 \leq q \leq n-3$ for disjoint great spheres $S^p$ and $S^q$. 
\item A finite cover of $M$ decomposes into elementary Benoist manifolds
which are as follows\/{\rm :} 
\begin{itemize} 
\item two type III ones 
meeting one another at their common boundary. 
Furthermore, $W_h$ is projectively diffeomorphic to an open submanifold of $\SI^n$.
Here, the characteristics $(p,q)$ and $(p', q')$ of the adjacent ones
satisfy one of
$p'=p, q'=q$ or $p'= q+1, q'= p-1$, 
where $2 \leq p, p' \leq n-2, 1 \leq q, q' \leq n-3$.
\item type II ones in circular sequence, which is a generalized Hopf manifold
where $\dev_h$ is 
a finite covering map of $\SI^n -S^1 - S^{n-2}$
for disjoint great spheres $S^1$ and $S^{n-2}$, 
or 
\item type II ones but ending with two type I ones   
\end{itemize} 
\end{itemize} 
\end{itemize} 
\end{prop} 
\begin{proof} 
Let $N$ be the submanifold so that $M \setminus N$ completes to a union of 
elementary Benoist manifolds. Let $\tilde M$ denote the universal cover and
let $\tilde N$ denote the inverse image of $N$. 
Then the first item is clear. 

Let $M'$ be a finite cover of $M$. Let $N'$ be the inverse image.
Then $M'\setminus N'$ covers the interior of elementary Benoist manifolds. 
Clearly, the completions cover the elementary Benoist manifolds. 

For the second item, 
each elementary Benoist manifold of type II can be covered by one with two boundary components, which is a $2$-fold cover. 
We do this for each elementary Benoist manifold of type II in
the splitting. Then we glue back to obtain a finite cover of $M$.

The first item of the last item and the first starred item follow by Lemma \ref{lem:typeIII}.

The last two starred items follow by considering how an infinite or circular sequence of
pre-Benoist manifolds of type II develops to $\SI^n$. 

\end{proof}

\begin{lem}\label{lem:typeIII}
Suppose that $M$ splits so that an elementary Benoist manifold of type III of type $(p, q)$ occurs. 
Then there are only one or two elementary Benoist manifolds in the split manifolds
\begin{itemize}
\item It could be a generalized Hopf manifold covered by
a finite cover of 
$\SI^n \setminus \SI^p \setminus \SI^q$ for $2\leq p \leq n-2, 1\leq q\leq n-3$, and $p+q=n-1$, or 
\item there is an adjacent elementary Benoist manifolds of type III 
with characteristic $(p', q')$, $p'= q+1, q'= p-1$,  
where $2 \leq p, p' \leq n-2, 1 \leq q, q' \leq n-3$.
\end{itemize}
Furthermore,  the holonomy cover $M_h$ of $M$ 
is projectively diffeomorphic to its image on $\SI^n$ under $\dev_h$ in the second case. 
\end{lem}
\begin{proof} 
First, suppose that there is an elementary pre-Benoist manifold of 
type III one with characteristic $(p, q)$ in $M_h$.
It has a connected boundary homotopy equivalent to 
$\SI^{p-1} \times \SI^q \times \bR$ for $2 \leq p \leq n-2, 1\leq q \leq n-3$. 
Since only type III ones have this topological type boundary property, 
there is a unique adjacent elementary pre-Benoist manifold,
must be of type III, and they must share their boundary. 

Therefore, these respectively map as finite convering maps 
\begin{itemize} 
\item to $H^n_+ \setminus B^p \setminus \SI^q$ 
where $H^n_+$ is a hemisphere bounded by a great sphere $S^{n-1}$, and 
$S^q$ is a great sphere in $\partial H^n_+$ for $B^p = S^p \cap H^n_+$
for a great sphere $S^p$ disjoint from $S^q$, and 
\item 
to $H^n_- \setminus B^{p'} \setminus S^{q'}$ 
$H^n_-$ is the other hemisphere bounded by a great sphere $S^{n-1}$, and 
where $S^{q'}$ is a great sphere in $\partial H^n_-$ 
for $B^{p'} = S^{p'} \cap H^n_-$
for a great sphere $S^{p'}$ disjoint from $S^{q'}$. 
\end{itemize} 
Since the boundary of the two elementary pre-Benoist manifolds must agree,  
there are only two possibilites: 
\begin{itemize}
\item $S^{n-1} \cap B^p = S^{n-1} \cap B^{p'}$ and 
$S^q = S^{q'}$.
\item $S^{n-1} \cap B^p = S^{q'}$ and $S^{n-1} \cap B^{p'} = S^q$. 
\end{itemize} 

In the first case, since $g$ acts on $S^p, S^q, S^{p'}, S^{q'}, S^{n-1}$, we must have 
$S^p = S^{p'}$ and $S^q = S^{q'}$. 
Then $M_h$ maps under $\dev_h$ as a finite covering to
$\SI^n - S^p -S^q$ for $2 \leq p \leq n-2, 1 \leq q \leq n-3$
where $(p, q)$ is the characteristic and and $p+q = n-1$. 

In the second case, $(p', q')$ should be $p'= q+1$ and $q'= p-1$
so that the boundary agrees. 
In this case, since the covering maps has to agree as a covering map
on the boundary of the type III ones, the covering maps are diffeomorphisms.  
It follows that $W_h$ is projectively diffeomorphic to its image 
in $\SI^n$. 
\end{proof}

\begin{cor} \label{cor:propBen} 
A Benoist manifold is finitely covered by one of the following\/{\rm :} 
\begin{itemize} 
\item[(i)] a union of two type III elementary 
Benoist manifolds, 
\item[(ii)]  a union of type II elementary Benoist manifolds in a finite circular sequence which is a generalized Hopf manifold. 
\item[(iii)] a union of type I or II elementary Benoist manifolds in a finite linear sequence
where there are two type I ones at the end. 
\end{itemize} 
\end{cor} 
\begin{proof}
This is immediate from Proposition \ref{prop:propBen}. 
\end{proof} 

Note that only (i) and (iii) correspond to pure Benoist manifolds.

\subsection{Topology of Hopf and Benoist manifolds}\label{subsec:top} 

Recall the definition of pure Hopf and generalized Hopf manifolds, and 
those of Benoist manifolds from Section \ref{subsec:Benoist}.
We  need some facts: 

\begin{prop}\label{prop:topology} 
Assume $n \geq 3$. 
\begin{itemize} 
\item 	A pure Hopf manifold is diffeomorphic to a manifold finitely covered by 
 $\SI^1 \times \SI^{n-1}$. 
\item 	A generalized Hopf manifold of type $(p, q)$ is diffeomorphic to a manifold
finitely covered by  $\SI^p \times \SI^q \times \SI^1$, $1\leq p \leq q\leq n-2, p+q=n-1$. 
\item 	A Benoist manifold is diffeomorphic to 
\begin{itemize} 
\item a manifold finitely covered by
 $\SI^1\times \SI^1 \times \SI^{n-2}$, which is also a generalized Hopf manifold, 
	or
\item a manifold finitely covered by 
$\SI^1 \times \SI^{n-1}$ 
\end{itemize} 
respectively depending on whether 
	the decomposition contains no elementary Benoist manifold of type I or not. 
\end{itemize} 
\end{prop}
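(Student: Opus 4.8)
The plan is to prove Proposition \ref{prop:topology} by analyzing each of the three manifold types separately, in each case reducing the diffeomorphism-type question to a statement about the holonomy cover and its deck action, and then pushing the product structure down through a finite cover.

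\textbf{Pure Hopf manifolds.} First I would recall that a pure Hopf manifold is projectively covered by a component of $\SI^n \setminus S^0 \setminus S^{n-1}$, where $S^0$ is a pair of antipodal points disjoint from the great sphere $S^{n-1}$. Such a component is an open $n$-hemisphere with the point and its antipode removed, and is diffeomorphic to $\SI^{n-1} \times \bR$ (radial coordinates from the removed point out to the boundary sphere). The deck group acts properly and freely; passing to the holonomy cover $M_h$, the generator of the infinite cyclic holonomy image acts by a translation in the $\bR$-factor (it pushes the fixed point of largest-norm eigenvalue toward the sphere of the remaining eigenvalues), so the quotient of $\SI^{n-1}\times \bR$ by this $\bZ$-action is diffeomorphic to $\SI^{n-1}\times \SI^1$. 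Since $\pi_1(M)/K_h \cong \bZ$ and $M_h \to M$ is a finite covering (the kernel $K$ acting with finite quotient on the sphere factor), $M$ is finitely covered by $\SI^1 \times \SI^{n-1}$.

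\textbf{Generalized Hopf manifolds.} Here I would invoke Lemma \ref{lem:p=1} directly. That lemma already establishes that $M_h$ is diffeomorphic to $\hat S^p \times \hat S^q \times \bR$ for finite covers $\hat S^p, \hat S^q$ of $S^p, S^q$, and that when $p \geq 2$ one has $M_h = \tilde M$ diffeomorphic to $S^p \times S^q \times \bR$. In either case the holonomy generator acts as a translation in the $\bR$-factor (the join parameter), so the $\bZ$-quotient is $\hat S^p \times \hat S^q \times \SI^1$, which is finitely covered by $\SI^p \times \SI^q \times \SI^1$. Since $M_h \to M$ is a finite cover, $M$ itself is finitely covered by $\SI^1 \times \SI^p \times \SI^q$ with $1 \leq p \leq q \leq n-2$ and $p+q = n-1$.

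\textbf{Benoist manifolds.} This is the case I expect to carry the real content, and it splits according to Proposition \ref{prop:Smillie} and the decomposition structure described in Section \ref{sub:Benoist}. If the decomposition uses no type I elementary piece, then by the remark following the definition of Benoist manifolds the manifold decomposes only into type II pieces arranged in a circular sequence, which is precisely a generalized Hopf manifold with $p=1, q=n-2$; by the generalized Hopf case just proved it is finitely covered by $\SI^1 \times \SI^1 \times \SI^{n-2}$. If instead the decomposition contains a type I piece, then (by Proposition \ref{prop:Smillie}) it is a sequence of type II pieces ending with two type I pieces; here the $\SI^1$ circle-factor coming from the join parameter is capped off on both ends by the type I caps (each type I piece has a single connected boundary that is a pure Hopf $(n-1)$-manifold), so the total space is a sphere bundle that assembles to something finitely covered by $\SI^1 \times \SI^{n-1}$. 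The main obstacle will be making the last assembly argument precise: I would build a diffeomorphism to the model by tracking the $\SI^{n-2}$-fibers across the adjacency faces $F^J_{i,k}$ (using the explicit face combinatorics of Proposition \ref{prop:nbrick}), showing these glue into a trivial $\SI^{n-2}$-bundle over a base built from the type II circular part, and then verifying that attaching the two type I caps fills in two opposite points to upgrade the $\SI^{n-2}$-fiber to an $\SI^{n-1}$-fiber; after passing to the one- or two-fold cover supplied by the ``up to double coverings'' clause, the bundle trivializes and one reads off the finite cover $\SI^1 \times \SI^{n-1}$.
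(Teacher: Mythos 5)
Your overall strategy runs parallel to the paper's: identify a suitable cover of each manifold as $N \times \bR$ for a closed manifold $N$ (via Lemma \ref{lem:p=1} in the two Hopf cases, and by assembling the covers of the elementary pieces — type II cylinders capped by two type I balls — in the Benoist case), then take the quotient by a cyclic deck group. The genuine gap is at the decisive step, and it occurs in all three of your cases: you assert that the quotient of $N\times\bR$ by the cyclic action ``is diffeomorphic to $N\times\SI^1$.'' A priori that quotient is only a mapping torus: the generator acts by $(x,t)\mapsto (\bar\gamma(x), f(x,t))$, so the quotient is a fiber bundle over $\SI^1$ with fiber $N$ and monodromy $\bar\gamma$, and it is diffeomorphic to (or finitely covered by) the product only if $\bar\gamma$, or some finite power of it, is \emph{isotopic to the identity}. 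Since $\pi_0\mathrm{Diff}(\SI^{n-1})$ and $\pi_0\mathrm{Diff}(\SI^p\times\SI^q)$ are nontrivial in general (exotic diffeomorphisms of spheres in higher dimensions), this is precisely the point requiring proof, and it is where the paper invests its effort: it invokes Lemma 3.1 of \cite{Benoist94}, which produces a connected Lie group $\hat N$ acting on the cover $M_K$ with $\hat N\cap\Gamma_K$ of finite index in the deck group $\Gamma_K$, so that some power $\gamma^m$ lies in $\hat N$ and is therefore isotopic to the identity; only then is the corresponding $m$-fold cover the trivial bundle $N\times\SI^1$. Your phrase ``acts by a translation in the $\bR$-factor'' controls only the $\bR$-direction and says nothing about the isotopy class of the induced map on $N$. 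In the Hopf cases one could patch this by hand (a power of the monodromy is the restriction of a projective transformation lying in the identity component of the projective group), but in the Benoist case with type I caps the leaf space $\SI^{n-1}$ is assembled from pieces and the induced monodromy is \emph{not} the restriction of a single projective map, so some version of the Lie-group (or pseudo-isotopy) argument is unavoidable; your suggestion that ``the one- or two-fold cover supplied by the `up to double coverings' clause'' trivializes the bundle is a misreading — that clause concerns two-sidedness of the splitting hypersurfaces in the decomposition, not bundle monodromy, and there is no a priori bound of $2$ on the order of the monodromy in the mapping class group.

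A secondary slip: you call $M_h \ra M$ a finite covering. It is not — its deck group $\Gamma_h = \pi_1(M)/K_h$ is (virtually) infinite cyclic. The correct move, and the paper's, is to choose an infinite cyclic subgroup $\langle\gamma\rangle$ of finite index in $\Gamma_K$, form $M_K/\langle\gamma^m\rangle$, and observe that this is a finite cover of $M = M_K/\Gamma_K$; the product structure is then established on that quotient. With these two repairs (the isotopy-to-identity of a power of the monodromy, and the corrected covering bookkeeping), your outline matches the paper's proof: circular type II sequences reduce to the generalized Hopf case with $(p,q)=(1,n-2)$, and linear sequences capped by two type I pieces glue, by the tubular neighborhood theorem, into $\SI^{n-1}\times\bR$.
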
 
\begin{proof} 
%


Let us demonstrate that a finite cover of the holonomy cover of each of these manifolds is diffeomorphic to $N \times \mathbb{R}$, where $N$ is a closed manifold.

By using Lemma \ref{lem:p=1}, we can conclude that for pure Hopf or generalized Hopf manifolds, there exist covering spaces that are diffeomorphic to $\mathbb{R} \times \mathbb{S}^{n-1}$ or $\mathbb{R} \times \mathbb{S}^p \times \mathbb{S}^q$, respectively, where the inclusion of $\mathbb{R}$ arises from the join construction.

We go to the other possibilities for Benoist manifolds according the Corollary \ref{cor:propBen}: 
We take a finite cover decomposing into elementary Benoist manifolds. 

Suppose that there is an elementary Benoist manifold of type III. Then we have a union of
two elementary pre-Benoist manifolds of type III as the holonomy 
cover. 
By the proof of Lemma \ref{lem:typeIII}, 
we easily see that the union is projectively diffeomorphic to 
$\SI^n \setminus B^i \setminus B^j$ 
for two balls $B^i$ and $B^j$ of dimension $i$ and $j$ respectively for $i+j = n$. 
Then this open region is diffeomorphic to $\SI^{n-1}\times \bR$ as the balls are contractible by deformation retractions.

Suppose now that at the end of a linear sequence, up to double covering, there are two elementary Benoist manifolds of type I. In this case, the overall manifold is a union of a linear sequence of manifolds that are diffeomorphic to $I \times \mathbb{S}^{n-2} \times \mathbb{R}$ and two other manifolds that are diffeomorphic to $B^{n-1} \times \mathbb{R}$ and meet along their manifold boundary.

Using the smooth tubular neighborhood theory, we can observe that the union of these manifolds is diffeomorphic to $\mathbb{S}^{n-1} \times \mathbb{R}$.


We concluded that our manifolds are covered by $N\times \bR$ for a closed manifold $N$.	
Note that the possibilities of $N$ are $\SI^{n-1}, \SI^p\times \SI^q$.
Also, $M_K$ for a normal subgroup $K \subset K_h$
of finite index is homeomorphic to $N \times \bR$ in each case. 

In order for there to be a proper action, there exists an element $\gamma$ in 
the deck transformation group of $M_K$ as a cover of $M$
 such that $N \times {0}$ and $\gamma(N \times {0})$ are disjoint.
Let $M'''$ be $M_K/\langle \gamma \rangle$. Then 
$M'''$ admits a fiber bundle $N \ra M''' \ra \SI^1$. 

Let $\Gamma_K$ denote the deck transformation group $M_K \ra M$ isomorphic 
to $\pi_1(M)/K$.
The theory of Benoist in \cite{BenTor} works with the universal cover but the theory also
work with any cover of a holonomy cover as well. 
Lemma 3.1  in \cite{Benoist94}  and the remark after that show that there is 
a connected Lie group acting $\hat N$ on $M_K$ where $\hat N \cap \Gamma_K$ is 
a finite-index normal subgroup of $\Gamma_K$.
This implies that $\gamma^m \in \hat N$ for some $m$,
and $\gamma^m$ is isotopic to the identity map on
$M_K$.

We can then consider an $m$-fold cover $M^{(iv)}$ of $M'''$ corresponding to $\gamma^m$.
We showed above that $M_K$ is diffeomorphic to $N \times \SI^1$, 
and there is a fiber bundle $N \ra M^{(iv)} \ra \SI^1$. 
Since $\gamma^m$ is isotopic to the identity map on $M_K$, 
the monodromy of this bundle is isotopic to the identity and 
hence, $M^{(iv)}$ is diffeomorphic to $N \times \SI^1$. 


	\end{proof} 

\subsection{Reproving Proposition 4.9 of Benoist}\label{subsec:Prop4.9}

We will prove a stronger version of 
Proposition 4.9 of Benoist \cite{BenTor}, which generalizes Nagano-Yagi \cite{NY} and Goldman \cite{Gthesis}, in two steps. We need to do this because the proof in
the original paper is incorrect. Also, the proposition itself only identifies that the manifold
is diffeomorphic to what is written below. 

\begin{thm}[Proposition 4.9 of \cite{BenTor}]\label{thm:main} 
Let $M$ be a closed real projective manifold, $n \geq 3$, 
with an infinite-cyclic holonomy group.
Then $M$ is a pure Hopf manifold, a generalized Hopf manifold or
a pure Benoist manifold. 
Furthermore, $M$ is diffeomorphic to a manifold finitely covered  by
$\SI^1 \times \SI^{n-1}$ and 
$\SI^1\times \SI^p \times \SI^q$, $1 \leq p \leq q \leq  n-2, p+q+1=n$.
\end{thm} 
\begin{proof}
The first part is proved by Proposition \ref{prop:mainp}. 
The last part of the theorem is proved by Proposition \ref{prop:topology} 
\end{proof} 

\begin{rem}\label{rem:BenNotation} 
Proposition 4.9 of \cite{BenTor} states that $M$ is finitely covered by 
$R_g, S_g,$ or $T_{g, p}$ where $g$ is a projective automorphism in the identity component of a Nilpotent Lie group. 
Here, $R_g$ is a Hopf or generalized Hopf manifold, 
$S_g$ is a Benoist manifold containing an elementary one of type III, 
and $T_{g, p}$ is one consisting of elementary ones of type I and II only. 
$S_g$ and $T_{g,p}$ are pure Benoist manifolds. 
\end{rem}


\begin{prop} \label{prop:mainp} 
A closed real projective manifold, $n \geq 3$,  with an infinite-cyclic holonomy group is a pure Hopf manifold, a generalized Hopf manifold, or a pure Benoist manifold. 
\end{prop}  
\begin{proof}
This is proved in Section \ref{subsec:general}. 
In particular, Lemma \ref{lem:persist} gives us the final part. 
\end{proof} 

When $n=2$, the classification due to Goldman \cite{Gthesis} is slightly different 
where the elementary pieces can be assembled in more general ways. 


\section{$\Delta$-orbits} \label{sec:delta}

We will now discuss some consequences of the octantization of Benoist \cite{Benoist94}. 
Let $\Delta$ denote a full positive diagonalizable group over $\bR$. 

The full theory of Benoist in \cite{Benoist94} is very encompassing. 
However, we will concentrate here only on positive diagonalizable groups. Also, in Section 2 of \cite{BenTor}, he gives some summary of theories that are needed here. 
We don't believe we can summarize these better than what he has done already.
If one reduces the theory to diagonalizable groups, this theory is quite straightforward.

\subsection{Simplices} \label{subsec:simplices}
The following material is already explained in Chapter 5 of \cite{BenTor}. 
However, we repeat it here for notational purposes. 

Let $v_1, \dots, v_{n+1}$ dnote the standard basis vectors of $\bR^{n+1}$. 

\begin{itemize} 
\item A subset $I$ of ${\pm 1, \pm 2, \dots, \pm (n+1)}$, where only one of $i$ and $-i$ appears for each integer $i$ such that $1 \leq i \leq n$, is referred to as a {\em  choice set}. 
\item A {\em full choice set} $J$ is a choice set where all absolute values in ${1, 2, \dots, n+1}$ are present. For each $l$, we denote by $J(l)$ the element of $J$ with the absolute value $l$.
\item Given a full choice set $J$ and an integer $i$, we define the {\em lower set} $J_i$ as $J \cap \{\pm 1, \dots, \pm i\}$, and the {\em upper set} $J^i$ as $J \cap \{\pm i, \dots, \pm (n+1)\}$. These lower and upper sets are subsets of $J$ that contain the respective elements.
\item For a choice set $I$, let $\sigma^I$ denote the closed convex simplex with vertices $v_e$ for $e \in I$ on $\SI^n$. 
\item Let $\sigma^{I, o}$ denote the interior of $\sigma^I$. Note that when $I$ has only one or two elements, we may not use the set notation for $I$.
\end{itemize} 
An index set is, of course, a subset of $\{1, 2, \dots, n+1\}$. 


Benoist \cite{BenTor} works with the universal cover; however, the theory applies to any cover of a holonomy cover. 
Proposition \ref{prop:orbitclosure} is Lemma 4.11 in the same paper. 
See Section \ref{subsec:counter} for counterexamples. 
But we restrict ourselves to regular diagonalizable holonomy groups where his proof is correct.

For notational convenience, our holonomy generating 
regular diagonalizable elements are chosen so that its eigenvalues norms
are strictly decreasing. We also provide a short proof since this follows from much general result of \cite{BenTor}.

\begin{prop}[Orbit closures from  Example 5.10 Benoist \cite{BenTor}] \label{prop:orbitclosure} 
Suppose that a closed manifold $W$ has an infinite-cyclic 
regular positive diagonalizable holonomy group with the generator $\gamma$. 
Let $W'$ be a cover of the holonomy cover
with an associated developing map $\dev'$ and 
a holonomy homorphism $h'$. 
Let $\Delta$ denote the unique full positive diagonal group commuting with the holonomy group. 
Let $\mathcal{O}$ be an open $\Delta$-orbit in $W'$. 
Then the closure $\clo_{W'}(\mathcal{O})$ in $W'$ 
is a manifold with corner mapping
diffeomorphic to $Y^J_i:= \sigma^{J_i}  \ast_o \sigma^{J^{i+1}}$ for 
for a full choice set $J$ and some $i$
under $\dev'$ and $\lambda_{i+1}(g) \ne \lambda_i(g)$
for each holonomy element $g$, $g\ne \Idd$. 
\end{prop} 
\begin{proof} 
By Proposition \ref{prop:diagc}, $\Delta = Is_0(W')$, and 
and $\clo_{W'}(\mathcal{O})$ is a union of orbits of $\Delta$.
By Proposition \ref{prop:oct2}, $\dev'| \clo_{W'}(\mathcal{O})$ is 
a diffeomorphism to its image as a cornered manifold. 
We have 
$\pi_1(W) = \langle \gamma, K_W \rangle$ for a deck transformation $\gamma$ going 
to the generator of $h(\pi_1(W))$ and the kernel $K_W$ of the holonomy homomorphism. 
By Proposition \ref{prop:oct2},
$\clo_{W'}(O)$ covers a compact cornered manifold in
a finite cover of $W$ with the deck transformation group $\langle \gamma^m, K_W\rangle$
for some integer $m\ne 0$.  
Hence, $\langle h(\gamma^m)\rangle$ acts on $\dev'(\clo_{W'}(O))$ properly and cocompactly.

Since $\gamma^m$ is regular diagonalizable, $O$ is an open simplex
mapping to an open simplex $\sigma^{J, o}$ in $\SI^n$ with vertices
$v_{j_i}$, $i=1, \dots, n+1$, for some full choice set $J$. 

We claim that $h(\gamma^m)$ acts properly and cocompactly on 
$\sigma^{J_i}  \ast_o \sigma^{J^{i+1}}$ for each $i$: 
Take the subset 
\[S:= \left\{ \left[ \frac{1}{2} v_x + \frac{1}{2} v_y\right] \Bigg| [v_x] \in \sigma^{J_i}, \llrrV{v_x}=1, 
[v_y] \in \sigma^{J^{i+1}}, \llrrV{v_y}=1 \right\}.\] 
Then $S \ast_o h(\gamma^m)(S)$ is a compact fundamental domain
of the action. 

Conversely, we cannot have any smaller union $O'$ of open simplices than 
$\sigma^{J_i}  \ast_o \sigma^{J^{i+1}}$: Otherwise, $O' \cap S \ast_o h(\gamma^m)(S)$ is not 
compact. This violates the cocompactness. 

We cannot have any bigger union $O'$ of open simplices than 
$\sigma^{J_i}  \ast_o \sigma^{J^{i+1}}$:  Otherwise, 
$\bigcup_{i\in \bZ} \gamma^{nm}(S)$ has accumulation points in  
the extra open simplices, contradicting the properness. 

Now, $\dev'$ restricted to $\clo_{W'}(O)$ is a diffeomorphism, 
and the image contains the interior of $\sigma^{J_i}  \ast_o \sigma^{J^{i+1}}$ for some
full choice set $J$ and some $i$. 
Since $\dev'(\clo_{W'}(O))$ is a union of the interior and its sides, 
with a compact quotient,  we must have the result true for some $J$ and $i$.
\end{proof} 

Here, the number $i$ of $Y^J_i$ is called  the {\em type} of $Y^J_i$.
We denote by $\SI^{n-1, j}$ the subset of $\SI^n$ given by setting $x_j =0$. 
We denote by $F^J_{i, j}$ the subset of
$Y^J_i \cap \SI^{n-1, j}$.

Note 
\begin{equation}\label{eqn:face}
F^J_{i, j}=\emp \hbox{ if and only if } i=1, j=1 \hbox{ or } i=n+1, j=n+1.
\end{equation}
If it is not empty, it is of dimension $n-1$ and is 
called the {\em $i$-th face}. 
Also, a $k$-th vertex of $Y^J_i$ is defined as one in
$\{v_k, -v_k\}$ in the closure of $Y^J_i$. 
Note that a $k$-th vertex and a $k$-th face are said to be {\em opposite}.



The following describes the possible change of combinatorics of these sets across the faces.
Loosely speaking, this says that 
the types as denoted by $i$ on $Y^J_i$ 
can change only across the $i$-th face of $Y^J_i$ only by adding
$\pm 1$ to the index $i$. 
This fills in the last part of the proof of Proposition 4.9 of \cite{BenTor}.
\begin{prop}[Next brick \cite{BenTor}]\label{prop:nbrick} 
Assume as in Proposition \ref{prop:orbitclosure}. 
An orbit closure mapping to 
$Y^J_i$ may intersect with another one for $Y^{J'}_j$ in a common face 
if and only if 
one of the following holds\/{\em :} 
\begin{itemize} 
\item $J$ and $J'$ differ by a sign at the $k$-place, $k\ne i, j$,  
we have $i=j$, and  $Y^J_i \cap Y^{J'}_i = F^J_{i, k}$. 
{\em (}Only, $j_k \ra -j_k$ for $k \ne i$)
\item $j= i-1$, $J'_j =  J_{i-1}$,  
and $J^{\prime j+1}  = J^i \cup \{-J(i)\}$.
Furthermore, $Y^J_i \cap Y^{J'}_j = F^J_{i, i}= F^{J'}_{i-1, i} $. 
{\em (}$J(i)$ is deleted from $J_i$ and moved to as $-J(i)$ in $J^{\prime, j}$.{\em )}
\item $j=i+1$, $J'_{j} = J_i \cup \{-J(i+1)\}$,  
and $J^{\prime j+1} = J^i \setminus \{J(i+1)\}$.
Furthermore, $Y^J_i \cap Y^{J'}_j = F^J_{i, i+1}= F^{J'}_{i+1, i+1}$. 
{\em (}$j({i+1})$ is deleted from $J^{i}$, and $-J(i+1)$ added to $J'_{i+1}$.{\em )}
\end{itemize} 
\end{prop}
\begin{proof}
	For the first item,  
suppose that $F^J_{i, k}$ is in the intersecting face of $Y^J_i$ and $Y^{J'}_j$. 
	 Then $F^J_{i, k} = F^{J'}_{j, k}$ and the $k$-th vertices are opposite. 
	 Since $F^J_{i, k}\cap Y^J_i = F^J_{j, k} \cap Y^{J'}_j$, we obtain 
	 \[F^J_{i, k} \cap \sigma^{J_i} = F^J_{j, k} \cap \sigma^{J'_j}
	 \hbox{ and  } F^J_{i, k} \cap \sigma^{J^{i+1}} = F^J_{j, k} \cap \sigma^{J^{\prime j+1}}. 
\]
	 Since a $k$-th face contains all but one vertex, $J$ and $J'$ may differ at only one vertex 
	 $v_k$ or $v_{-k}$. 
	 
Now, for the remaining items, 
note that the intersecting face $F^J_{j, k}$ contains $n$ vertices,
and it is an open join of a simplex in $\sigma^{J_i} \cap \sigma^{J'_j}$ and 
a simplex in $\sigma^{J^{i+1}} \cap \sigma^{J^{\prime j+1}}$. 
Hence, the lower and upper sets of $J$ and $J'$ agree except for one vertex
for $\dim F^J_{j, k}=n-1$, and we have $j = i-1$ or $j= i+1$. 
Hence, we have possibilities for the last two items.
%
\end{proof} 

\subsection{Ambient pieces}\label{subsec:ambient} 


In this section, we will show that given a first lifting of an orbit, there is some set of lifts of orbits 
that are associated with it. This corresponds to the last part of the proof of Proposition 4.9 of 
\cite{BenTor}. 

%



We denote by $\SI^I$ the minimal great sphere containing 
every $\pm v_j$ for $j\in I$ for any choice set  or an index set $I$. 
%
%
We denote by $H^{n-1}_{J(l), J(m)}$ for $l\ne m$ the hemisphere given by $J(m) x_m \geq 0$
in $\SI^{J \setminus \{J(l)\}}$ for a full choice set $J$.


\begin{prop}[Elementary Benoist manifolds of type I]\label{prop:canonicalI}
Assume as in Proposition \ref{prop:orbitclosure}. 
\begin{itemize} 
\item Given $Y^J_1= \sigma^{j_1}\ast_o \sigma^{J^2}$
which lifts to a cover $W'$ of the holonomy cover of $W$, 
there exists a subspace
$\sigma^{j_1}\ast_o H^{n-1}_{j_1, j_2}$, which also lifts to $W'$. 
The manifold boundary is equal to the punctured hemisphere $\sigma^{j_1} \ast_o \SI^{J^3}$.

\item Given $Y^J_{n} = \sigma^{J_n} \ast_o \sigma^{j_{n+1}}$ that lifts to $W'$, 
there exists a subspace $H^{n-1}_{j_{n+1} j_n} \ast_o \sigma^{j_{n+1}}$
that lifts to $W'$. 
The manifold boundary equals the punctured hemisphere 
$\SI^{J_{n-1}} \ast_o \sigma^{j_{n+1}}$. 

\item These are elementary Benoist manifolds of type I. 
\end{itemize} 
\end{prop} 
\begin{proof} 
For the first item, we consider $F^{J'}_{1, k}$ for $k \ne 1, 2$
as $F^{J'}_{1, 1} = \emp$.  We take the union of all $Y^{J'}_1$ that can be obtained 
from $Y^J_1$ by chains of adjacency by the faces of type $F^{J'}_{1, k},  k\ne 1, 2$, 
to obtain $Y^{J'}_1$ for every $J'$ such that $J'_1  = J_1, J'_2= J_2$. 
We never cross over $F^{J'}_{1, 2}$. 
Then the union is of the form in the item. 

We can lift each of the simplices here one by one by using the adjacency
by Proposition \ref{prop:orbitclosure}.
They lift since their union is simply connected
and $W'$ is a union of simplices mapping to the simplices in $\SI^n$ diffeomorphically
since these were lifts of simplices in $\SI^n$. 

For the second part, we consider 
$F^{J'}_{n+1, k}$ for every $J'$ such that $J'_n  = J_n, J'_{n+1}= J_{n+1}$
and $k \ne n+1, n$ as $F^{J}_{n+1, n+1} = \emp$.
We never cross over $F^{J'}_{n, n+1}$. 
Again, the argument is similar. 

The third part is obvious. 
\end{proof}

The first item in the above proposition provides examples of $A_g, d_+ =1$ 
and the second item provides ones of $B_g, d_- =1$  in Example 4.8 of \cite{BenTor}.


We denote by $H^{J_i}$ the hemisphere in $\SI^{J_i}$ given 
by $J(i) x_i \geq 0$, and by $H^{J^{i+1}}$ the hemisphere in $\SI^{J^{i+1}}$ 
given by $J(i+1) x_{i+1}\geq 0$.


The first item in the following proposition provides examples of 
$A_g, d_+ =2$ or $B_g, d_- = 2$, $d_\pm \geq 2$, and 
the second item provides examples of $A_g$ and $B_g$ where 
$d_+ \geq 3, d_- \geq 3$
for $g= h(\gamma)$. 


We denote by $H^n_{J(i)}$ the hemisphere given by $J(i)x_i \geq 0$. 
In the first item of the following $(l, m)$ is the {\em  type} of the canonical ambient piece of type II.
In the second item of the following $i$ is the {\em type} of the canonical ambient piece of type III.

\begin{prop}[Elementary Benoist manifolds of types II, III] \label{prop:canonicalII} 
Assume as in Proposition \ref{prop:orbitclosure}. 
Given $Y^J_i$ that lifts to $W'$, we have the following\/{\em :} 
\begin{itemize} 
\item For $i= 2$, let $(l, m) = (1, 2)$, and 
for $i= n-1$, let $(m, l)=(n, n+1)$.
\[H^n_{J(2)} \setminus H^{J_2} \setminus \SI^{J^3} = 
H^{J_2} \ast_o\SI^{J^3} \hbox{ or } 
H^n_{J(n)} \setminus  \SI^{J_{n-1}} \setminus_o H^{J^n} 
= \SI^{J_{n-1}} \ast_o H^{J^n}\] respectively
contains it, and one of its finite covers lifts to $W'$
as an embedding provided $W'$ is a finite cover of $W_h$. 
{\em (}If $n> 3$, it directly lifts to $W'$.{\em )}
The boundary has two components
that are finite  covers of  two punctured $(n-1)$-hemispheres  
$\{v_{J(l)}\} \ast_o  \SI^{J \setminus \{J(l), J(m)\}}$ 
and $\{v_{-J(l)}\} \ast_o  \SI^{J\setminus \{J(l), J(m)\}}$. 
\item If $2 < i < n-1$, then $Y^J_i$ is in 
$H^{J_i} \ast_o \SI^{J^{i+1}}= H^n_{J(i)} \setminus H^{J_i} \setminus \SI^{J^{i+1}}$ 
and it lifts to $W'$. 
The boundary equals $\SI^{J\setminus \{J(i)\}} \setminus \SI^{J_{i-1}} \setminus \SI^{J^{i+1}}$, which is connected. 
\item The first and second items respectively are elementary Benoist manifolds of types II and III. In the later case, the character is $(i-1, n-i)$. 
\end{itemize} 
\end{prop} 
\begin{proof} 
Suppose that $i=2, n-1$. 
We consider all faces $F^J_{i, k}$ where $k \ne m$. 
Then $F^J_{i, k}$ contains $v_{J(m)}$. 
We can lift each of the simplices here one by one by using the adjacency
using Proposition \ref{prop:orbitclosure}.
By Proposition \ref{prop:nbrick}, we can show that 
we can add a lift of $Y^{J'}_i$ for every possible $J'$ that differs from $J$ except for 
the $m$-th places. This process 
gives us the subspace 
$(\sigma^{J(l), J(m)} \cup  \sigma^{-J(l), J(m)}  ) \ast_o \SI^{J \setminus \{J(l), J(m)\}}$. 

Consider the inverse image $U$ of it in $W'$ or $\tilde W$. 
If $n > 3$, then $|J\setminus \{J(l), J(m)\}| \geq 3$, and the covering must be one-to-one
as the sphere of dimension $>1$ has only a trivial cover. 
Hence, it directly lifts to $W'$ as an embedding. 
For $n=3$,  
a component of $U$ in $W_h$ cannot cover 
$(\sigma^{J(l), J(m)} \cup  \sigma^{-J(l), J(m)}  ) \ast_o \SI^{J \setminus \{J(l), J(m)\}}$ 
infinitely; otherwise, we can obtain a codimension-$0$ submanifold  of $W'$ 
with no compact quotient under $\langle \gamma, K_W\rangle$. 
Hence, we can lift 
a finite cover  of $(\sigma^{J(l), J(m)} \cup  \sigma^{-J(l), J(m)}  ) \ast_o \SI^{J \setminus \{J(l), J(m)\}}$ to
$W'$ as an embedding. 



Suppose that $2 < i < n-1$.  Then $|J_i|, |J^{i+1}| \geq 3$, and 
$\SI^{J_i}$ and $\SI^{J^{i+1}}$ are simply connected. 
Then by taking a union of all subspaces mapping to sets of form $Y^{J'}_i$ 
provided $J'(i) = J(i)$, which can be reached by 
a sequence of adjacent bricks across these faces $F^{J'}_k$, $k \ne i$,
we obtain a subspace mapping diffeomorphic to $H^{J_i} \ast_o \SI^{J^{i+1}}$.
This lifts since the second factor sphere is simply connected. 




The last item is clear. 
\end{proof}

\subsection{Decomposition} \label{subsec:canonical}

In the following, the ``compactness" 
does not have to include the ``Hausdorff property" unlike the French school of 
mathematics. 
The following is essentially proved by Benoist in the proof of Proposition 4.9 in
\cite{BenTor} with more general holonomy assumptions.

\begin{prop}[Benoist] \label{prop:Smillie} 
Assume as in Proposition \ref{prop:orbitclosure}.
Then $W$ decomposes into elementary Benoist manifolds.
%
\qed
\end{prop} 
\begin{proof} 
%
We use Propositions \ref{prop:canonicalI} and \ref{prop:canonicalII}
to show that $W$ splits into elementary Benoist manifolds of various types:
We start with an open orbit. If this is a one giving rise to a type III one, 
then we take the elementary pre-Benoist manifold of type III. Since its boundary
is connected, we can have only one other elementary pre-Benoist manifold of type III adjacent to it. Hence, we obtain a splitting of $W$.
This is a decomposition since the eigenvalues are positive. 

Suppose that we have no open orbit giving rise 
to an elementary Benoist manifold of type III. 
Suppose that there is an open orbit of the type considered in Proposition \ref{prop:canonicalI}. We find the elementary Benoist manifold of type I. 
We consider the adjacent orbit which can be ones for the type I or type II ones. 
We find the elementary Benoist manifold of type I or II with a disjoint interior from
the first one. 
Since the eigenvalues are positive, each elementary Benoist manifold
of type II has two boundary components. 
Then we keep this process to obtain a desired decomposition. 

Finally, if we have only elementary Benoist manifold of type II, 
a similar reasoning will prove the result.

\end{proof} 

For $n=3$, the decomposition is not unique since we are able to use 
$(l. m)=(1, 2)$ or $(l, m)=(3, 4)$ and both will work out.


\subsection{General cases} \label{subsec:general}

%


The purpose is to prove Proposition \ref{prop:mainp}. 
Let $M'$ be a closed real projective manifold with infinite-cyclic $h(\pi_1(M'))$. 
We will now show that 
$M'$ is a pure Hopf manifold, a generalized Hopf manifold, or a pure Benoist manifold.

Let $C_h$ denote the infinite-cyclic holonomy group in $\SL_\pm(n+1, \bR)$
generated by $h(\gamma)$ for the generator $\gamma \in \pi_1(M')$. 
Since there is a factoring $h: \pi_1(M') \ra C_h \ra \SL_\pm(n+1, \bR)$, 
and the inclusion homomorphism of $C_h$ can be deformed to any nearby homomorphisms, 
we can find a one-parameter family of
$h_t:\pi_1(M') \ra \SL_\pm(n+1, \bR)$ so that 
\begin{itemize}
\item $h_t(\pi_1(M'))$ is infinite-cyclic for every $t \in [0, \eps)$, and 
\item $h_{t'_0}(\gamma)$ for some $t'_0\in (0, \eps)$ 
is an semisimple generator and has only eigenvalues that are real or 
complex of moduli in $\bQ \pi$. 
\end{itemize}


Let $(D, h)$ be the element of $\Def''(M')$.  
By the Ehresmann-Thurston principle written in \eqref{eqn:ET}, 
 we can find a parameter $(D_t, h_t)$  
for a developing map $D_t:\tilde M \ra \SI^n$ and
a holonomy homorphism $h_t|\pi_1(M'):  \pi_1(M')\ra \SL_\pm(n+1, \bR)$  
in $\Def''(M')$ for $t \in [0, t'_0]$
so that $D_0 = D$ and $h_0 = h$. 
We denote by $M'_t$ the smooth manifold $M'$ with the corresponding real projective structure. 
Note that $h_t$ has a common kernel to be denoted $K''$.

We can choose $m> 0$ so that 
$h_{t'_0}(\gamma^{\prime m})$ is positive diagonalizable. 
We take the $m$-fold cover $M''$ of $M'$ corresponding to 
$\langle \gamma^{\prime m} \rangle$. 
Now, $\pi_1(M''_{t_0}) = \langle \gamma^{\prime m}, K''\rangle$.
We denote by $M''_t, t\in [0, t'_0]$, $M''$ with the real projective structure 
induced from $M'_t$.  Now we extend the parameter 
to $M''_t, t\in [0, t_0]$ so that $M''_{t_0}$ has a positive regular diagonalizable holonomy
corresponding to $\gamma^{\prime m}$. 
We denote $\gamma'':= \gamma^{\prime m}$.

By Theorem \ref{thm:main}, we have three possibilities: (i) $M''_{t_0}$ is a pure Hopf manifold, 
(ii) a pure generalized Hopf manifold, or (iii) a pure Benoist manifold. 
Now, for $M''_0$, we have: 


\begin{lem}\label{lem:persist}  
We assume that $h(\pi_1(M'))$ is infinite-cyclic. 
Then $M''= M''_0$ 
is a pure Hopf, generalized Hopf, or pure Benoist manifold. 
\end{lem}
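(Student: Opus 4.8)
The plan is to transfer the classification from the regular diagonalizable endpoint $M''_{t_0}$, which is governed by Theorem \ref{thm:main}, back to the original structure $M''_0$ along the continuous family $t\mapsto M''_t$, $t\in[0,t_0]$, in the spirit of Cooper--Goldman. The first point is that the smooth manifold $M''$ and its relevant covers do not change with $t$: since $h_t(\pi_1(M''))$ is infinite cyclic for every $t$, the homomorphism $\pi_1(M'')\to\bZ$ it induces is integral, hence locally constant and so constant on the connected interval $[0,t_0]$; thus $K=\ker h_t$ is independent of $t$ and the holonomy cover $M''_h=\tilde M''/K$ is a fixed smooth manifold carrying a continuous family of $\Gamma_h$-equivariant developing maps $\dev_{h,t}\colon M''_h\to\SI^n$ with holonomy $h_t$. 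By Theorem \ref{thm:main} and Proposition \ref{prop:topology}, $M''_h$ is diffeomorphic to $N\times\bR$ with $N=\SI^{n-1}$ or $N=\SI^p\times\SI^q$.

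I then say that $M''_t$ is \emph{standard} if $\dev_{h,t}$ is a covering map onto an open domain $\Omega_t\subset\SI^n$ of one of the canonical shapes appearing in Proposition \ref{prop:Smillie} (a punctured open hemisphere, a complement $\SI^n\setminus S^p\setminus S^q$ of disjoint great spheres, or a Benoist domain assembled from the canonical ambient pieces of Propositions \ref{prop:canonicalI} and \ref{prop:canonicalII}); equivalently, $M''_t$ is a pure Hopf, generalized Hopf, or pure Benoist manifold. Let $A\subseteq[0,t_0]$ be the set of parameters at which $M''_t$ is standard. By Theorem \ref{thm:main} and Proposition \ref{prop:Smillie} we have $t_0\in A$, and the goal is to show $0\in A$.

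The core argument is that $A$ is open and closed, whence $A=[0,t_0]$ by connectedness. For openness I would use the stability of the property that the developing map of the holonomy cover is a covering onto a canonical domain: the bounding great spheres are carried by the extremal invariant subspaces of $h_t(\gamma'^m)$, which move continuously, and by the Ehresmann--Thurston stability \eqref{eqn:ET} a nearby $\rpn$-structure again has $\dev_{h,t}$ a proper cocompact covering onto the corresponding canonical domain. For closedness I would use that along $A$ the infinite cyclic group $\Gamma_h$ acts properly discontinuously, freely, and cocompactly on $\Omega_t$ with fixed quotient $M''$; passing to a limit $t_k\to t_\infty$, the maps $\dev_{h,t_k}$ converge, the limiting action of $\Gamma_h$ remains proper and cocompact, and the only domains admitting such a cyclic action with cross-section the fixed $N$ are again the canonical ones, so $t_\infty\in A$.

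The main obstacle is the closedness step on the subinterval $[0,t'_0]$, where $h_t(\gamma'^m)$ may degenerate from diagonalizable to a non-semisimple Jordan form as $t\to0$, so the limit domain is governed by a non-diagonalizable element rather than the regular diagonalizable ones handled by Theorem \ref{thm:main}. The danger is that the limiting developing image collapses or that the covering degenerates, and the crux is to exclude this using that the compact quotient $M''$ is fixed: properness and cocompactness force the limit image to remain the full canonical domain bounded by the extremal generalized eigenspaces of $h_0(\gamma'^m)$, and force $\dev_{h,0}$ to stay a covering. Once this is established, reading off the shape of $\Omega_0$ and matching it against the fixed diffeomorphism type $N\times\bR$ exhibits $M''=M''_0$ as a pure Hopf, generalized Hopf, or pure Benoist manifold, as claimed.
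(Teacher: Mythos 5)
Your overall strategy coincides with the paper's: deform along $t\in[0,t_0]$ to the regular diagonalizable endpoint, classify $M''_{t_0}$ by Theorem \ref{thm:main}, and transfer the conclusion back to $t=0$ by showing that the set $A$ of ``standard'' parameters is open and closed in $[0,t_0]$. Your openness argument (continuity of the extremal invariant subspaces of $h_t(\gamma'^m)$ together with the Ehresmann--Thurston principle \eqref{eqn:ET}) is essentially the paper's. The genuine gap is in the closedness step, which you yourself identify as the crux but then assert rather than prove. You write that at a limit parameter ``the limiting action of $\Gamma_h$ remains proper and cocompact, and the only domains admitting such a cyclic action with cross-section the fixed $N$ are again the canonical ones.'' That last claim is a classification statement at a parameter where $h_{t_\infty}(\gamma)$ need not be diagonalizable --- exactly the situation in which Theorem \ref{thm:main} is unavailable --- so invoking it is circular: it is essentially the content of Corollary \ref{cor:mainp}, which this lemma exists to prove. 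Likewise, ``properness and cocompactness force the limit image to remain the full canonical domain'' is precisely the statement that must be established; no mechanism for it is given.

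The paper's mechanism is concrete and you would need some version of it: fix a compact fundamental domain $K$ (diffeomorphic to $\SI^{n-1}\times I$ in the pure Hopf case, to $I\times \SI^p\times\SI^q$ in the generalized Hopf case) and use that $D_t\to D_{t'}$ in the $C^r$-topology, which is finer than the compact-open topology, so that $D_{t'}(K)$ is again a fundamental domain inside the limit domain $H^{n,o}_{t'}\setminus\{p_{t'}\}$ bounded by limits of the invariant hemispheres and fixed points; this is what keeps the developing map a covering onto the full canonical domain at the limit. Moreover, in the pure Benoist case the paper needs additional ingredients absent from your sketch: convergence of the totally geodesic hypersurfaces decomposing $M''_t$ into elementary Benoist pieces, persistence of the eigenvalue properness conditions, and a uniform gap $\lambda_{1,t}/\lambda_{2,t}>1+\eps$ ensuring the type I pieces at the ends do not degenerate; your single lumped notion of ``standard'' never addresses how the Benoist decomposition survives the limit. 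A minor further point: your claim that $\ker h_t$ is constant because an integral homomorphism is ``locally constant'' is not a valid argument for a continuous family of representations; in the paper the kernel is constant by construction, since each $h_t$ factors through the fixed surjection $\pi_1(M')\to C_h$.
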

\begin{proof} 
We have the one-parameter family 
$(D_t, h''_t)$ where $h''_t= h_t| \pi_1(M'')$. 
In the first two possibilities (i), (ii) of $M''_{t_0}$, 
$D_{t_0}| \tilde M \ra \SI^n$ is a diffeomorphism to 
its image $H^n -\{p\}$ or a covering map to $\SI^n - S^p -S^q$ 
where $H^n$ is a hemisphere and $S^p$ and $S^q$ respectively 
are a great $p$-sphere and a great $q$-sphere disjoint from each other.


In the first case (i), $K''$ is trivial, 
and $\pi_1(M'')$ is generated by an element  $\gamma''\in \pi_1(M'')$ since 
$H^{n, o} \setminus \{p\}$ is a universal cover and 
any group with a nontrivial action on it will have a nontrivial image under $h$. 
Now, $h_{t_0}(\gamma'')$
 has a matrix decomposition corresponding to $p$ and the great sphere 
$S^{n-1} = \partial H^n$ so that we can have a compact quotient space. 
The properness of the action is equivalent 
to the condition that the norm of the eigenvalue corresponding to $p$ is strictly larger or strictly smaller than those associated with $S^{n-1}$ as we can see from 
the projectivizations of the sums of eigenspaces. 
(see Corollaries 1.2.3 and 1.2.4 of \cite{Kt}.) 
This property persists for $t$ in a neighborhood $O$ of $t_0$ for 
some parameter of points $p_t$ and the great spheres $S^{n-1}_t$ bounding a hemisphere 
$H^n_t$ 
where $p_{t_0} = p$ and $S^{n-1}_{t_0} = S^{n-1}$
and $H^n_{t_0}=H^n$. 
Hence, there exists a real projective structure on $M''$ realizing $h''_t$
by taking a developing map a diffeomorphism $\tilde M \ra H^{n, 0}_t \setminus \{p_t\}$. 
By the local homeomorphism property of \eqref{eqn:ET}, 
this structure is unique up to isotopies and 
$D_t:\tilde M \ra \SI^n$ is a diffeomorphism to 
$H^n_t -\{p_t\}$ for $t \in O'$ for a neighborhood $O' \subset O$ of $t_0$. 

Let $A$ denote the subset of $[0, t_0]$ where $(D_t, h''_t)$ is pure Hopf. 
We can show that $A$ is open since we can replace $t_0$ with any $t \in A$.

We claim that this set is also closed. 
Since $A$ is open by the above, we take a component $A_0$ containing $t_0$.
Let $t'$ denote the infimum or the supremum of $A_0$. 
$D_{t'}$ is a limit of $D_t$ for $t \in A_0$ in the $C^r$-topology.
We take the fundamental domain of $M''_{t_0}$ diffeomorphic to 
$F:=\SI^{n-1}\times I$.
Then $D_{t}(F)$ is the fundamental domain in $H^{n,o}_t -\{p_t\}$ for $t\in A$. 
Let $H^{n}_{t'}$ denote one of the geometric limiting hemispheres of $H^n_t$, $t\in A$. 
Let $p_{t'}$ denote one of the limit points of $p_t, t\in A$. 
Since we have the $C^r$-topology finer than the compact open topology,  
$D_{t'}(F)$ again is the fundamental domain in $H^{n,o}_{t'} - \{p_{t'}\}$.
Hence, $A$ is closed. 

This implies $A =[0, t_0]$ and $M''$ is a pure Hopf manifold as well.

Consider now the second case (ii) of generalized Hopf manifolds. 
By Lemma \ref{lem:p=1}, the holonomy cover 
$\tilde M/K''$ is a finite cover of
 $\SI^n \setminus S^p \setminus S^q$
under $D_{t_0}$. 
In addition, $D_t$ induces a finite covering map
$\hat D_t: \tilde M/K''\ra \SI^n \setminus S^p \setminus S^q$
since $K''$ is still the kernel of $h_t|\pi_1(M'')$ by the above construction of $h_t$. 
There is a compact fundamental domain 
$F$ in $\tilde M/K''$ diffeomorphic to $I \times S^p\times S^q$. 
By reasoning similar to the case of pure Hopf manifolds involving
the norms of the eigenvalues and the projectivization of sum eigenspaces,
we can show that $M''_0$ is a generalized Hopf manifold.

Now we go to the pure Benoist-type ones (iii).
Let $A$ denote the subset of $[0, t_0]$ where $(D_t, h''_t)$ is purely Benoist. 


Now, $M''_{t_0}$ is finitely covered by $\SI^1\times \SI^{n-1}$
by the third item of Proposition \ref{prop:topology}. 
First, consider the cases where there are no elementary Benoist manifolds of type III. 

By the Ehresmann-Thurston local homeomorphism principle, 
it suffices to show that for every sufficiently short path in $h''_t$ containing $h''_{t'}$ for some $t'$, we can construct a path of real projective structures with the same type 
decomposition with the corresponding holonomy homomorphism equal to $h''_t$. 

Up to a finite covering, $M''_{t_0}$ is the union of a finite sequence of elementary Benoist manifolds of type II with
two elementary Benoist manifolds of type I at two ends
by Proposition \ref{prop:Smillie}. 
Assume without loss of generality that we have the ones covered
by $\sigma^{j_1}\ast_o H^{n-1}_{j_1j_2}$
up to reversing the choice of the generator of $\pi_1(M''_{t_0})$. 
Then the following two eigenvalue conditions 
are equivalent to the properness of the action
for domains covering the Benoist manifolds of type I and type II. 
\begin{itemize}
\item The associated eigenvalue $\lambda_{1, t_0}$ of
$v_{j_1}$ 
is strictly smaller than $\lambda_{2, t_0}$ of $v_{j_2}$. 
\item The eigenvalues associated with $\gamma$ with $v_{j_1}$ and $v_{j_2}$ are 
strictly less than the norms of the remaining eigenvalues.

\end{itemize} 

Note \eqref{eqn:ET} still holds. 
For $t_1\in A$, 
$h_t(\gamma)$ has the eigenvalue $\lambda_{1, t}$ with the largest norm 
and the second largest norm $\lambda_{2, t}$ of the eigenvalues. 
Since they form continuous functions for $t\in A$, 
we have the following: 
\begin{itemize} 
\item The fixed points $v_{1, t}$ and $v_{2, t}$ corresponding to 
$\lambda_{1, t}$ and $\lambda_{2, t}$ of $h_t(\gamma)$
are well defined, and we can choose these so that 
$t\mapsto v_{1, t}$ and $t\mapsto v_{2, t}$ are continuous. 
\item  $h_t(\gamma)$ acts on a great circle $\SI^1_t$ containing $v_{1, t}$ and $v_{2, t}$ 
and the disjoint great sphere $\SI^{n-2}_t$
which are uniquely determined by eigenvalue conditions. 
\item  $t \mapsto \SI^1_t$ and $t\mapsto \SI^{n-2}_t$
form two continuous functions $A \ra \mathcal{S}^j(\SI^n)$ for $j=1, n-2$, where 
$\mathcal{S}^j(\SI^n)$ is the space of great spheres of dimension $j$. 
\end{itemize} 

In this case, 
we can still obtain the decomposition of Benoist types I and II near the original ones.
Hence, $A$ is open. 
Since  \eqref{eqn:ET} is a local homeomorphism, 
$M''_t$ decomposes into the same number and types of 
elementary Benoist manifolds for $t\in A$ in the identical component of $A$. 

The closedness of $A$ follows since any sequence of totally geodesic subspaces 
decomposing $M'$ will converge to these up to choosing 
subsequences, and two eigenvalue conditions hold by the properness conditions. 
Also, the existence of an elementary Benoist manifold of type I implies that 
$\lambda_{1, t}/\lambda_{2, t} > 1 +\eps$ for $t \in [0, t_0]$ and some $\eps > 0$
since, otherwise, we lose the free action for the elementary Benoist manifold of type I at ends.
 Hence, the number of elementary Benoist type I and II manifolds do not change. 

Finally, suppose that there is an elementary Benoist manifold of type III. 
Since we studied the generalized Hopf manifolds above, we may
assume without loss of generality that
we have a decomposition into two elementary Benoist manifolds of type III
where one of them has the characteristic $(p, q)$ and the other has the
characteristic $(q+1, p-1)$. We can consider the universal cover here as a domain of 
$\SI^n$ by Proposition \ref{prop:propBen}, and $M_h$ is the universal cover. 

Again,  we show that the set of $t$ where the same type of decomposition exists is open and closed. 
For the openness of $A$, 
by the Ehresmann-Thurston local homeomorphism principle, 
it suffices to show that for every sufficiently short path in $h''_t$ containing $h''_{t'}$ for some $t'$, we can construct a path of real projective structures with decomposition of the same type with the corresponding holonomy homomorphism equal to $h''_t$. 

Again, the construction of such a parameter involves
choosing $H^n_t$, $B^p_t\subset \SI^p$, $\SI^q_t$ where $h''_t(\pi_1(M''))$ acts on. 
This can be done in a continuous manner. Hence, the openness follows. 
Adding other generators works just as well. 

For the closedness of $A$, the argument is similar to the above cases. 



\end{proof}

\section{Topological obstructions} \label{sec:obstruct}

The purpose of this section is to prove Corollary \ref{cor:main}. 
First, we do the holonomy version. 
We have an exact sequence 
\[ 1 \ra C \ra h(\pi_1(M)) \ra G \ra 1, C \cong \bZ\]
where a finite group $G$ acts nontrivially on $C$. 

We find a finite regular cover of $M$ by $M'$ so that $h(\pi_1(M'))$ is infinite-cyclic. 
Let $h(\gamma)$ be the generator for $\gamma \in \pi_1(M')$. 
Let $K$ denote the kernel of $h| \pi_1(M')$. 
$K$ is identical to the kernel of $h|\pi_1(M)$ since the latter set does not go to 
a nontrivial element of $G$. 
Hence, the holonomy cover of $M'$ corresponding to $K$
is the same as the holonomy cover $M_h$ of $M$. 

The deck transformation group of $M_h \ra M'$ is infinite-cyclic and is
to be identified with $C$. 
Hence, there is a deck transformation $\delta$ of $\hat M_h \ra M$
such that $\delta t \delta^{-1} = t^{-1}$ for all $t \in C$. 
Finally, let $\dev_h: M_h \ra \SI^n$ denote the developing map. 

Note that we can assume
\[ 1 \ra C \ra \Aut(M_h) \ra G \ra 1\]
where $\Aut(M_h)$ is the deck transformation group of $M_h$
since $h(\pi_1(M))$ is isomorphic to $\Aut(M_h) \cong \pi_1(M)/\ker h$. 

Let $\gamma$ denote a generator of the infinite-cyclic group $C$. 
Now $M_h$ has two ends since $M_h/C$ equals a connected compact manifold $M'$.
Then we can see that there is an end where $\gamma^m$ for sufficiently large $m> 0$ sends 
each end neighborhood into the end neighborhood of the same end. 
We call this the {\em positive} end. The other end is called a {\em negative end}. 


Now, we choose an element $\delta$ of $\pi_1(M)$ going to an element of $G$ 
giving us a nontrivial action on $C \cong \bZ$. 
Since $C$ preserves the ends of $M_h$, 
$\delta$ must switch an end neighborhood of one end of $M_h$ to 
an end neighborhood of the other end by
the twisted extension property of the deck transformation groups of $M_h$. 





\subsection{Those covered by  domains in $\SI^n$.}\label{subsec:domain} 


We will now consider when $M_h$ must be projectively diffeomorphic 
to an open submanifold of $\SI^n$.  
More precisely, we will consider the cases that are forced to be so under the condition on
$p$ and $q$ in Proposition \ref{prop:propBen}. 
$M'$ is  a pure Hopf manifold or a generalized Hopf manifold with $2 \leq p\leq q \leq n-3$
or a union of two elementary Benoist manifolds of type III respectively. 

In a pure Hopf manifold case, 
$M_h= H^{n,0} -\{p\} = \{p\} \ast_o \SI^{n-1}$.
As $n \geq 3$, $M_h$ is a universal cover of $M$ as well. 
We parameterized the space by $\SI^{n-1}\times \bR$. 
Since it is a join, we have directions on each join segment.
We consider $\SI^{n-1}$ to be the positive direction by 
possibly changing the generator of $C$. 
We orient $\bR$ so that it is in the $\SI^{n-1}$-direction. 
Then any end neighborhood $U$ in the positive direction union 
$\SI^{n-1}$ is a neighborhood of $\SI^{n-1}$
Hence, $h(\delta)$ sends $\SI^{n-1}$ to $\{p\}$. However, $h(\delta)$ is a homeomorphism. 
This is a contradiction.


We go to the generalized Hopf manifold cases. 
First, $2 \leq p \leq n-3, 1 \leq q \leq n-3$ implies that 
$M_h$ is projectively diffeomorphic to 
a domain in $\SI^n$, and  $M_h$ is also a universal cover. 
Again, $M$ is covered by 
the simply connected open join $S^p\ast_o S^q = \SI^n \setminus S^p \setminus S^q$
for disjoint great spheres $S^p$ and $S^q$ of dimension 
$p, q$, $2 \leq p \leq n-3, 1 \leq q \leq n-3$, $p+q = n-1$, respectively. 
We consider $S^p$ to be associated with positive direction and 
$S^q$ to be the negative direction. 

By Proposition \ref{prop:topology}, $M$ equals
$S^p\times S^q \times \SI^1$. 
By Corollary 11 of Section 2 of Chapter 7 of \cite{Spanier}, 
$\pi_e(M)$ is isomorphic to $\pi_e(S^p\times S^q \times \SI^1)$ for $e \geq 2$. 
isomorphic to $\pi_e(S^p)\times \pi_e(S^q) \times \pi_e(\SI^1)$. 

Again,  each deck transformation reversing 
the direction sends an end neighborhood of an end
to that of the other end. Hence, $S^p$ is sent to $S^q$ here. 
If $p\ne q$, it is impossible since $\delta$ extends to a projective automorphism of 
$\SI^n$. 
If $p= q$, since $\delta$ must preserve each homotopy class mod $\bZ_2$ of 
$S^p$ and $S^q$ in $S^p \times S^q \times \bR = S^p \ast_o S^q$. 
Since $\delta$ is a projective automorphism, 
we must have $h(\delta)| S^p: S^p \ra S^q$ is a homeomorphism. 
This implies that 
$\delta$ as a deck transformation of $S^p\times S^q \times \bR$
switches the generators of $\pi_p(S^p\times S^q \times \bR)$. 
By Theorem 8 of Section 3 of Chapter 7 of \cite{Spanier}, 
$\delta$ acts to switch the generators of the first two factors 
of $\pi_e(M)$ considered as $\pi_e(S^p)\times \pi_e(S^q) \times \pi_e(\SI^1)$.
This contradicts the premise of Corollary \ref{cor:main}. 


Now, we consider the last case where we have a sum of two elementary Benoist
manifolds of type III with different characteristics according to Proposition \ref{prop:propBen}. 
By Lemma \ref{lem:typeIII} and its proof, 
we can show using their elementary pre-Benoist manifolds 
that $\dev_h$ is a homeomorphism to 
$\SI^n \setminus H^{p}_+ \setminus H^{q+1}_-$ for the type $(p, q)$ for 
an elementary Benoist manifold of type III,
where $\SI^n$ is a union of two hemispheres $H^n_+$ and $H^n_-$, 
and there are two spheres $\SI^p$ and $\SI^{q+1}$ meeting at two points 
$N$ and $N_-$ in the interiors of the two hemispheres respectively,
and $H^p_+ = \SI^p \cap H^n_+$ and $H^{q+1}_- = \SI^{q+1} \cap H^n_-$. 

Again, $h(\delta)$ sends $H^p_+$ to $H^{q+1}_-$ and vice versa.
Hence, $p=q+1$, $p+q=n-1$ and $n=2p$. 

Since $h(\delta^2)$ acts on $H^p_+$, it must fix a point $z$ on it. 
Let $L$ denote the linear map in $\GL(n+1, \bR)$ representing $h(\delta)$. 
Then $\vec x$ so that $x =[\vec x]$ is an eigenvector of $L$. 
We multiply $L$ by a scalar so that the eigenvalue equals $1$. 

Then $[1/2 x + 1/2 Lx]$ is a fixed point of $h(\delta)$ outside 
$H^{p}_+ \cup H^{q+1}_-$, which contradicts the fact that $\delta$ is a deck transformation
and $\dev_h$ is a homeomrphism to its image.
Therefore, we conclude that the possiblities 
involving the elementary Benoist manifolds of
type III also do not happen.

\subsection{General coverings} \label{subsec:generalcov} 

We will now consider the cases in the remaining possibilites for $M$ in
the conclusions of Proposition \ref{prop:propBen}. 
These are generalized Hopf manifolds with $p=1, q=n-2$,
and pure Benoist manifolds without elementary Benoist manifolds of type III.

We will use the notion of Kuiper completions as in \cite{psconv}. 
%
%
Since $M_h$ is a holonomy cover of $M'$, 
the developing map $\dev'$ induces an immersion $\dev_h: M_h \ra \SI^n$ 
equivariant with respect to $h': \pi_1(M')/K' \ra \SL_\pm(n+1, \bR)$.
(See Section \ref{subsec:geostr} for the holonomy covers.)
We can Cauchy complete the path metric induced from the metric on 
$M_h$ induced from the standard Riemannian metric on $\SI^n$. 
We denote the metric by $d$. 
The Cauchy completion is denoted by $\che M_h$. 
The {\em ideal boundary}  $\delta_\infty M_h$ is defined as 
$\che M_h \setminus \hat M_h$.
In addition, each deck transformation extends to 
a self-homeomorphism of $\che M_h$. 
(See \cite{psconv}.)

%


First of all, $M'$ can be a generalized Hopf manifold with $p=1, q=n-2$. 
Here, $M'$ is covered by the universal cover of
$\SI^n \setminus S^1 \setminus S^{n-2}$. 
By Lemma \ref{lem:p=1}, 
the holonomy cover $\hat M_h$ covers $\SI^n \setminus S^1 \setminus S^{n-2}$
finitely by the map $\dev_h$.
Then the completed map
$\overline{\dev}'_h: \che M_h \ra \SI^n$ is a finite branch cover with branch loci $S^1$ and $S^{n-2}$ since we can use the geometry of
tubular neighborhoods of $S^1$ and $S^{n-2}$. 
Since $\pi_1(\SI^n \setminus S^1 \setminus S^{n-2})$ is infinite-cyclic 
or free abelian of rank $2$ when $p=q=1$, 
 $\che M_h\setminus \hat M'$ is a disjoint union of 
a circle $S^1_\infty$ and an $(n-2)$-sphere $S^{n-2}_\infty$, 
which covers $S^1$ and $S^{n-2}$ respectively under $\overline{\dev}'_h$. 

Hence, $M_h$ is diffeomorphic to $S^1 \times S^{n-2}\times \bR$. 
Since $\delta$ extends to a self-homeomorphism of $\hat M_h$, 
it sends $S^1_\infty$ to $S^{n-2}_\infty$ homeomorphically. 
If $n\ne 3$, this is not possible. 
If $n=3$, then $\delta_*$ acts on 
$\pi_1(M_h)\cong \pi_1(S^1_\infty) \times \pi_1(S^{n-2}_\infty)$
switching the two factors, which contradicts the premise
about the action of $\pi_1(M)$ on $\pi_i(M)\otimes \bZ_2$. 


Now, we go to Benoist manifolds with elementary ones of type I. 
$M'$ consisting of only two types 
$(1)$ and $(1, 2)$ or $(n+1)$ and $(n, n+1)$. 
The universal cover $\tilde M$ decomposes into 
submanifolds $N_i$ real projectively diffeomorphic to 
the covers of defining covers of elementary Benoist manifolds:
\begin{itemize}
\item $Q^o \cup (H^{n-1, o}\setminus  \{p\})$ for an $n$-bihedron $Q$ 
bounded by two $(n-1)$-hemispheres, one of which is $H^{n-1}$ and $p \in H^{n-1, o}$. 
\item finite covers of $H^{n} \setminus S^{n-2} \setminus S^1$ for an $n$-hemisphere $H^n$ and  disjoint great spheres 
$S^{n-2} \subset \partial H^n$ and a great circle $S^1$ passing $H^{n, o}$.
(See the proof of Proposition \ref{prop:mainp}.)
\end{itemize} 
In our case, since two elementary Benoist manifolds at the ends are of the first type, 
we see that the universal cover 
$\tilde M$ can be constructed by simply gluing projective 
copies of these along the 
punctured $(n-1)$-dimensional hemispheres. 
Hence, there are  submanifolds 
$N_1, \dots, N_m$ embedded in $\tilde M$
developing under $\dev'$ to the above manifolds in the two items.  
Also, we only need the domain itself for the second item 
since the first item ones are at the end of the finite sequence, and these 
have to match up. 

The closure of $N_i$ in $\che M$ under $\overline{\dev}'_h$ is homeomorphically mapped to
an $n$-bihedron or an $n$-hemisphere: This is because
the closure is the Cauchy completion, 
and the Cauchy completions are unique up to isometry.
(See Section 3 of  \cite{psconv}.) 
Hence, $\che M_h$ is a union of $\che N_1, \dots, \che N_m$ for some $m \geq 2$.

We may identify $\che N_i$ by $\overline{\dev}'_h$ to 
copies of $Q$ or $H^n$ in $\SI^n$ since these are the Cauchy completions. 
The above covers, respectively, give us the ideal boundary sets as follows:
\begin{itemize} 
\item The disjoint union of $\{p\}$ and the closed hemisphere
in $\partial Q$ other than $H^n$. 
\item The disjoint union of $S^{n-2} \subset \partial H^n$ 
and a segment $S^1 \cap H^n$. 
\end{itemize}
Since the ideal boundary of $\che M_h$  is a union of the set of ideal points in $\che N_i$, 
the ideal boundary is a disjoint union of 
\begin{itemize} 
\item an $(n-1)$-sphere that is 
the  union of two $(n-1)$-hemispheres in the ideal boundary of 
a bihedron for an  elementary Benoist manifold of type I, and 
\item a  segment that is a finite union of 
the segments that correspond to the intersections of great circles with $n$-hemispheres. 
\end{itemize} 
Since the set of ideal points has two connected components, 
the deck transformation $\delta$ extends to a self-homeomorphism of $\che M_h$ sending the sphere of dimension $n-1> 2$  to a segment. (For an explanation, see Chapter 3 of \cite{psconv}.) 
This is a contradiction. 

We conclude 
that $h(\pi_1(M))$ cannot be a nontrivially twisted finite extension of $\bZ$.

\begin{proof}[Completion of the proof of Corollary \ref{cor:main}]  
Now we go to the fundamental group part of the corollary; that is, 
we will show that $\pi_1(M)$ cannot be the nontrivially twisted 
extension of infinite-cyclic $C$. Let $t$ be a generator of $C$ and $\delta$ be an element so that $\delta t \delta^{-1} = t^{-1}$. 
If $h(t)$ is of finite order, then $h(\pi_1(M))$ is finite. 

If $h(\delta)$ is trivial, then $h(t) = h(t)^{-1}$, and $h(\pi_1(M))$ is finite. 
A closed real projective manifold $M$ with a finite holonomy group must have 
a spherical metric since the holonomy preserves some 
spherical metric on $\SI^n$. 
Hence, $\pi_1(M)$ is also finite.
This is absurd; $h(\delta)$ is not trivial, and $h(t)$ is of infinite order. 

Hence, $h(\pi_1(M))$ is a nontrivially twisted extension of $\bZ$. 
Now we apply the above result for the holonomy group. 
\end{proof}

\subsection{Some examples} \label{subsec:counter} 


Finally, we mention the following minor point. 
Lemma 4.11 of \cite{BenTor} is incorrect if we do not assume 
that the holonomy group action is diagonalizable. 
There $Y^+(i)$ is defined as follows: 
Suppose that a projective automorphism $g$ has positive eigenvalues 
$\alpha_1 > \alpha_2 > \dots > \alpha_{r} > 0$. 
Let $E_i = \ker (g - \alpha_i\Idd)$ and $d_i = \dim E_i$. 
$E = E_1 \oplus \cdots \oplus E_{r}$.
Let $N$ be a maximal connected nilpotent group containing $g$, 
and $\mathcal{O}$ be an $N$-orbit in $\SI^n$. 
For $i=1, \dots, r$, we define $Y^+(i)$ as
\[\overline{\mathcal{O}} \setminus \Pi(E_1 \oplus \dots \oplus E_i -\{O\}) \setminus 
 \Pi(E_{i+1} \oplus \dots \oplus E_r -\{O\}).\]


An open hemisphere in $\SI^n$ has a natural structure of the complete affine space. 
We will consider a complete affine space as an open hemisphere. 
Take a complete affine line $L$ on $\SI^1$ where a discrete translation group $\langle g\rangle$ acts on. 
The quotient is a projective $1$-manifold.
This is not in the form of $Y^+(i)$ immediately above Lemma 4.11 of 
\cite{BenTor} since 
one is removing the sums of cyclic subspaces associated with eigenvalues less than or greater than some number in that paper. 

We can make this example into higher dimensional ones. 
Let $\SI^n$ denote the real projective sphere and let 
$[e_i]$, $i=1, \dots, n+1$, the directions of the standard frame vectors.
We let $L$ be the complete affine line on
 the great circle that is a span of $[e_1], [e_2], [-e_1]$
with endpoints $[e_1], [-e_1]$.
We take $g$ as follows: 
\newcommand{\bigzero}{\mbox{\normalfont\Large\bfseries 0}}
\renewcommand{\arraystretch}{1.5}
\[
\left(\begin{array}{@{}c|c@{}}
  \begin{matrix}
  \lambda_1& 1 \\
  0 & \lambda_1
  \end{matrix}
  & \bigzero \\
\hline
  \bigzero &
  \begin{matrix}
  D(\lambda_3, \dots, \lambda_{n+1}) 
  \end{matrix}
\end{array}\right)
\]
where 
$D(\lambda_3, \dots, \lambda_{n+1})$ is a diagonal $(n-2)\times(n-2)$-matrix
with diagonal entries $\lambda_3, \dots, \lambda_{n+1}$, 
$\lambda_1 = \lambda_2 > \lambda_3, \dots, \geq \lambda_{n+1} > 0, 
\lambda_1 \lambda_2 \dots \lambda_{n+1} = 1.$
We define a simplex $\sigma :=[e_3] \ast \cdots \ast [e_{n+1}]$.
We let
$X : = \clo(L) \ast \sigma - \{[e_1]\}  - [-e_1]\ast \sigma$
where $g$ acts on. 
One can think of $X$ as the set of points represented 
by 
$a_1 e_1 + a_2 e_2 + \sum_{i=3}^{n+1} a_i e_i$ 
where $a_i \geq 0$ for $i\geq 2$ and  
we cannot have $a_1< 0$ and $a_2 =0$,  
and cannot have $a_1> 0$ and $a_i =0, i \geq 2$. 

Let $||\cdot||$ denote the standard Euclidean norm in $\bR^{n+1}$ containing $\SI^n$. 
We define $\sigma_{1/2}$ as the set $\{[e_1/2 + x/2]| ||x||=1, [x]\in \sigma\}$. 
Then we take the join $\hat \sigma:= [e_2] \ast \sigma_{1/2}$
which is the intersection of $X$ with a hyperspace containing $[e_2]$ and 
$\sigma_{1/2}$. Hence, $\hat \sigma$ separates $[e_1]$ from
$[-e_1]\ast\sigma$ in $X$. 
Now, $g$ sends it to a disjoint subspace
since $\sigma_{1/2}$ moves closer to $[e_1]$ strictly and 
so does $[e_2]$.
Hence, $g(\sigma_{1/2})$ is in one of the two components of 
$X - \sigma_{1/2}$ closer to $[e_1]$,
and $\hat \sigma$ and $g(\hat \sigma)$ bound a compact domain $J$ in $X$. 
Then $g^i(J)$ is disjoint from $J$ or meets $J$ at $\hat \sigma$ or $g(\hat \sigma)$. 
Also, $\bigcup_{i\in \bZ} g(J) =X$ since we can understand the positions of 
$g^i(\hat \sigma)$ by $g^i([e_2])$ and $g^i(\sigma_{1/2})$. 
Hence, $J$ is the fundamental domain of the action of $\langle g \rangle$. 
Again, $X$ cannot be of the form $Y^+(i)$. 

We can also extend this example such that $g$ has a Jordan decomposition of 
block sized $2, j_1,\dots, j_p$
replacing $\sigma$ with $S^{j_1}_1 \times \dots \times S^{j_p}_p$ for some collection of independent sets $S^{j_k}_k$ for $k=1, \dots, p$, that are either 
a great sphere of dimension $j_k$ if $j_k > 0$ or a point if $j_k=0$
corresponding to the blocks. 
The norms of eigenvalues are chosen to be all less than $\lambda_1$. 

By Proposition  \ref{prop:mainp}, 
our examples do not occur when the holonomy groups are virtually
infinite-cyclic for closed manifolds
since they are not elementary Benoist manifolds nor can they be decomposed 
into ones.  However, it is still unknown if this could 
occur as an embedded submanifold inside a closed real projective manifold 
of some complexity.  We leave this as a question. 
(See also \cite{ChoiCCD}.)

\begin{rem} 
It is unlikely that we can generalize this example further.
We tried increasing the multiplicity to $\lambda_1$ and adding 
more eigenvalues above $\lambda_1$. The compact Hausdorff quotients do not
seem to exist here. 
\end{rem} 




As a final example, we mention that 
\[(\SI^p \times \SI^p \times \SI^1)/\sim \hbox{ where }
(x, y, t) \sim (-y, x, 1-t)\]
admits a real projective structure for $n= 2p+1$. 
We construct this from 
\begin{multline}
\SI^{2p+1} \setminus \SI^{\{1, 2,\dots, p+1\}} \setminus \SI^{\{p+2, p+3, \dots, 2p+2\}} 
\cong  \SI^{\{1, 2,\dots, p+1\}} \ast_o \SI^{\{p+2, p+3, \dots, 2p+2\}}
\end{multline} 
and taking the quotient by the group generated by 
$g$ given by a diagonal matrix with eigenvalues $\lambda> 1$ for 
the first $p+1$ basis vectors and $1/\lambda$ for the remaining basis vectors
and $\delta$ is given by the projectivization of 
the map $\hat \delta: \bR^{p+1}\times \bR^{p+1}$ 
given by 
\[t(x, 0) + (1-t)(0, y) \mapsto (1-t)(-y, 0)+ t(0,x) \hbox{ for }x, y \in \bR^{p+1}.\] 

These belong to exceptional ones in Theorem 1.1(ii)of \c{C}oban \cite{Coban21}.

\bibliographystyle{plain} 

\bibliography{aff3bib.bib} 

\end{document}